\documentclass[preprint,12pt]{elsarticle}

\usepackage[utf8]{inputenc}
\usepackage[T1]{fontenc}
\usepackage{amsmath,amssymb,amsthm,mathtools}
\usepackage{enumitem}
\usepackage{array}
\usepackage{hyperref}

\hypersetup{hidelinks}

\theoremstyle{plain}
\newtheorem{theorem}{Theorem}[section]
\newtheorem{lemma}[theorem]{Lemma}
\newtheorem{proposition}[theorem]{Proposition}

\newtheorem{problem}[theorem]{Problem}

\theoremstyle{definition}
\newtheorem{definition}[theorem]{Definition}

\newtheorem{counterexample}[theorem]{Counterexample}

\theoremstyle{remark}
\newtheorem{remark}[theorem]{Remark}

\newcommand{\R}{\mathbb{R}}

\newcommand{\E}{\mathbb{E}}
\newcommand{\Prob}{\mathbb{P}}
\newcommand{\Lap}{\mathcal{L}}
\newcommand{\CM}{\mathrm{CM}}
\newcommand{\BF}{\mathrm{BF}}
\newcommand{\SBF}{\mathrm{SBF}}
\newcommand{\CBF}{\mathrm{CBF}}
\DeclareMathOperator{\Cov}{Cov}
\DeclareMathOperator{\Pois}{Poisson}
\DeclareMathOperator{\supp}{supp}

\makeatletter
\def\ps@pprintTitle{%
  \let\@oddhead\@empty
  \let\@evenhead\@empty
  \let\@oddfoot\@empty
  \let\@evenfoot\@empty}
\makeatother

\begin{document}

\begin{frontmatter}

\title{Bernstein Functions at Work:\\
Coalescents, Copulas, and Subordination}
\author[addr1]{Domingos S. P. Salazar\corref{cor1}}
\address[addr1]{Unidade de Educa\c{c}\~ao a Dist\^ancia e Tecnologia,
Universidade Federal Rural de Pernambuco,
52171-900 Recife, Pernambuco, Brazil}
\cortext[cor1]{Corresponding author.}

\begin{abstract}
Several positivity questions in stochastic processes, dependence modeling,
fractional analysis, and renewal theory reduce to a common recognition task:
after normalization, identify the object as a Laplace transform, a potential
density, an inverse-flow coefficient, or a finite kernel average, and then read
the sign pattern from that representation. We develop this recognition calculus
for completely monotone functions, Bernstein functions, special Bernstein
functions, and probabilistic realizations through subordinators and mixing
measures.

The main affirmative results settle three narrowly stated source questions in
the conventions used by their source papers. M\"ohle's
Problem~6.3 on the block-counting process of exchangeable coalescents with
residual singleton mass (dust) is proved by a finite-simplex ordered-pair kernel
certificate. For the
Pearse--Bondell power-divergence copula generators, we prove complete
monotonicity of the inverse throughout the remaining strict negative range
$\lambda\le-1$ identified in their Section~3.8. Together with the special cases already verified
in the source paper, this yields Archimedean copulas in every dimension for
$\lambda\le-1$. The Bendikov--Cygan monotonicity question for discrete renewal
sequences attached to special Bernstein functions is answered by representing
the potential kernel as a Gamma average of a nonincreasing density.

Supporting representation and boundary results cover Sibisi's
Prabhakar--Pollard $Q$-measure, the
Mecke--Nagel--Weiss atom at zero, and the cubic branch criterion $a^2\ge3b$.
\end{abstract}

\begin{keyword}
Bernstein functions \sep completely monotone functions \sep Stieltjes functions \sep subordination \sep exchangeable coalescents \sep Archimedean copulas
\MSC[2020] 44A10 \sep 60G51 \sep 60J90 \sep 62H20 \sep 33E12 \sep 26A48
\end{keyword}

\end{frontmatter}

\section{Introduction}

The classes of completely monotone (CM), Bernstein (BF), and Stieltjes
functions are the analytic shadow of three probabilistic operations: forming a
Laplace transform of a positive measure, forming the Laplace exponent of a
subordinator, and iterating the two. This dictionary---Bernstein--Widder for CM
functions, L\'evy--Khintchine for Bernstein functions, and Bochner
subordination for their composition---turns many analytic questions into
statements about a representing measure that one can often write down
explicitly \cite{schilling-song-vondracek,berg-forst}.

The present paper treats source problems from several literatures with separate
standard languages. For exchangeable coalescents and coalescents with
simultaneous multiple collisions we use the ranked-simplex framework of
Pitman, Sagitov, and Schweinsberg, with dust in the sense used in M\"ohle's
block-counting problem \cite{pitman-coalescents,sagitov,schweinsberg,mohle}.
For Archimedean copulas we use standard complete-monotonicity and
\(d\)-monotonicity criteria \cite{kimberling,nelsen,sklar,mcneil-neslehova}.
For Prabhakar and Mittag-Leffler functions we cite the classical sources of
Pollard and Feller, together with modern treatments
\cite{pollard,feller,prabhakar,gorenflo-kilbas-mainardi-rogozin,mainardi-garrappa,garra-garrappa,sibisi-prabhakar}.
For renewal sequences attached to Bernstein functions we use the discrete
subordination setting of Bendikov--Cygan and the potential-density theorem for
special Bernstein functions \cite{bendikov-cygan,schilling-song-vondracek}.
The common reduction is to Laplace-measure, inverse-flow, or finite-kernel
positivity.
Our organizing thesis is a recognition principle:

\begin{quote}
\emph{After the correct normalization, the special-function object at issue is a
moment, a survival function, or a subordination push-forward of a positive
measure; its analytic sign pattern is then determined by the support and
monotonicity of that measure.}
\end{quote}

The paper is organized by mechanism: representation, conjecture resolution,
Bernstein structure, and obstruction.

\begin{table}[t]
\centering
\caption{Main mechanisms and cited source problems.}
\label{tab:mechanisms}
\footnotesize
\begin{tabular}{@{}>{\raggedright\arraybackslash}p{0.39\linewidth}
>{\raggedright\arraybackslash}p{0.25\linewidth}
>{\raggedright\arraybackslash}p{0.26\linewidth}@{}}
\hline
Source object & Mechanism & Output \\
\hline
Prabhakar--Pollard \(Q\) \cite{prabhakar,pollard,sibisi-prabhakar} &
Stable subordination & \(Q\)-push-forward \\
Mecke--Nagel--Weiss \cite{mecke-nagel-weiss} & Exponential race &
Atom-at-zero criterion \\
M\"ohle block counts \cite{gaiser-mohle,mohle} & Collision kernel & Second-moment domination \\
Pearse--Bondell copulas \cite{pearse-bondell} & Inverse-flow series &
Negative-range CM \\
Li cubic branch \cite{li-relativistic} & Discriminant test & \(a^2\ge3b\) \\
Bendikov--Cygan renewal \cite{bendikov-cygan} & Gamma covariance &
\(C(k)\) decreasing \\
\hline
\end{tabular}
\end{table}

\emph{Canonical representations} (Section~\ref{sec:representations}). We show
that the Prabhakar \cite{prabhakar} three-parameter Mittag-Leffler
\cite{mittag-leffler,prabhakar} \(Q\)-measure in the strict Pollard range
\cite{pollard,sibisi-prabhakar} is an \(\alpha\)-stable subordination of a
transform-normalized Pollard measure (Theorem~\ref{thm:prabhakar}), and that
the Mecke--Nagel--Weiss Laplace-survival transform and its Poissonization
\cite{mecke-nagel-weiss} are realized by an exponential race, with the
atom-at-zero caveat needed for the literal finite-valued formulation
(Theorem~\ref{thm:mecke}).

\emph{Two affirmative source questions} (Section~\ref{sec:conjectures}). We prove
M\"ohle's Problem~6.3 for exchangeable coalescents with dust \cite{mohle}
(Theorem~\ref{thm:mohle}). Here dust means residual singleton mass. We also
prove complete monotonicity on the remaining strict negative range
\(\lambda\le-1\) identified by Pearse--Bondell for power-divergence copula
generators \cite[Section~3.8]{pearse-bondell}
(Theorem~\ref{thm:pd}). Both results address explicitly stated source
questions.

\emph{Bernstein structure} (Section~\ref{sec:bernstein}). We characterize the
Bernstein branch of a cubic inverse-polynomial by a discriminant inequality
(Theorem~\ref{thm:li}), as a model test case motivated by Li's
relativistic-diffusion subordination framework \cite{li-relativistic}, and
prove that the Bendikov--Cygan renewal sequence attached to a special
Bernstein function is nonincreasing \cite{bendikov-cygan}
(Theorem~\ref{thm:bendikov}).

\emph{Obstructions and certificate targets} (Section~\ref{sec:obstructions}).
Several examples mark the boundary of the calculus: a
Rastegar--Roitershtein redundancy conjecture fails
\cite{rastegar-roitershtein} (Counterexample~\ref{cx:rr}); a
complete-monotonicity reading of a Jonckheere--Shneer front-propagation equation
\cite{jonckheere-shneer} has non-CM solutions
(Counterexample~\ref{cx:js}); and Townes real-valued mixed-Poisson
\cite{townes-mixed,townes-stable} and Bazhlekova--Bazhlekov gap examples
\cite{bazhlekova} are retained only as certificate targets.

The recurring technical engines are few: Bernstein--Widder inversion; the
covariance sign of a monotone function against an increasing statistic; Fa\`a
di Bruno sign induction applied to an inverse ODE $y'=g(y)$ with $g\in\CM$; and
explicit certificates for the verified obstructions. We have isolated each as a
lemma so that the applications read uniformly.

\section{Preliminaries}\label{sec:prelim}

We recall the standard vocabulary; see \cite{schilling-song-vondracek} for a
complete treatment.

\begin{definition}[Completely monotone, Bernstein, and Stieltjes classes]\label{def:classes}
Let all functions below be defined on \((0,\infty)\) and take values in
\([0,\infty)\).
\begin{enumerate}[label=\textup{(\roman*)},leftmargin=2.3em]
\item A function \(f\) is \emph{completely monotone}, written \(f\in\CM\), if
\(f\in C^\infty\) and
\((-1)^n f^{(n)}(x)\ge0\) for every \(n\ge0\) and \(x>0\). It is
\emph{strictly completely monotone} if all these inequalities are strict.

\item A function \(g\) is a \emph{Bernstein function}, written \(g\in\BF\), if
\(g\in C^\infty\) and \(g'\in\CM\).

\item A Bernstein function \(g\) is a \emph{special Bernstein function}, written
\(g\in\SBF\), if \(x/g(x)\in\BF\).

\item A Bernstein function \(g\) is a \emph{complete Bernstein function}, written
\(g\in\CBF\), if its L\'evy measure has a completely monotone density.

\item A function \(h\) is a \emph{Stieltjes function} if
\begin{equation}
h(x)=\frac{a}{x}+b+\int_{(0,\infty)}\frac{1}{x+t}\,\sigma(dt),\qquad x>0,
\end{equation}
where \(a,b\ge0\) and
\(\int_{(0,\infty)}(1+t)^{-1}\,\sigma(dt)<\infty\).
\end{enumerate}
\end{definition}

\noindent
The two representation facts used next are standard in Bernstein-function
theory; we use the formulation in \cite{schilling-song-vondracek}.

\begin{proposition}[Standard Bernstein representations]\label{prop:standard-reps}
The following standard facts are used below.
\begin{enumerate}[label=\textup{(\roman*)},leftmargin=2.3em]
\item If \(g\in\BF\), then there are \(a,b\ge0\) and a measure \(\nu\) on
\((0,\infty)\) satisfying
\(\int_{(0,\infty)}(1\wedge t)\,\nu(dt)<\infty\) such that
\begin{equation}
g(x)=a+bx+\int_{(0,\infty)}(1-e^{-xt})\,\nu(dt).
\end{equation}

\item If \(\psi\in\SBF\) and \(\psi(0)=0\), then the potential measure with
Laplace transform \(1/\psi\) admits a representation
\begin{equation}
\frac1{\psi(\lambda)}
=b+\int_0^\infty e^{-\lambda t}u(t)\,dt,\qquad \lambda>0,
\end{equation}
where \(b\ge0\) and \(u\) has a nonincreasing locally integrable version.
\end{enumerate}
\end{proposition}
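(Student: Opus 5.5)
Part (i) is the L\'evy--Khintchine representation of a Bernstein function, and I would derive it from Bernstein--Widder applied to \(g'\). Since \(g'\in\CM\), Bernstein--Widder gives a positive measure \(\mu\) on \([0,\infty)\) with \(g'(x)=\int_{[0,\infty)}e^{-xt}\,\mu(dt)\) for \(x>0\). Set \(b=\mu(\{0\})\) and define \(\nu(dt)=t^{-1}\mu(dt)\) on \((0,\infty)\). Because \(g\ge0\) is nondecreasing, the limit \(a=g(0+)\) exists and is nonnegative. Integrating \(g'\) from \(0^+\) to \(x\) and applying Tonelli to the nonnegative integrand gives
\[
g(x)=a+bx+\int_{(0,\infty)}\frac{1-e^{-xt}}{t}\,\mu(dt)
=a+bx+\int_{(0,\infty)}(1-e^{-xt})\,\nu(dt).
\]
The integrability condition follows from the finiteness of \(g(1)\). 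The elementary bound \(1-e^{-t}\ge(1-e^{-1})(1\wedge t)\) yields \(\int(1\wedge t)\,\nu(dt)\le (1-e^{-1})^{-1}(g(1)-a)<\infty\).

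Part (ii) relies on the duality of special Bernstein functions. Write \(\psi^*(\lambda)=\lambda/\psi(\lambda)\), which lies in \(\BF\) by definition of \(\SBF\). Applying part (i) to \(\psi^*\) gives
\[
\psi^*(\lambda)=a^*+b^*\lambda+\int_{(0,\infty)}(1-e^{-\lambda t})\,\nu^*(dt).
\]
Dividing by \(\lambda\), I use the identity \((1-e^{-\lambda t})/\lambda=\int_0^t e^{-\lambda s}\,ds\) together with \(a^*/\lambda=\int_0^\infty e^{-\lambda s}\,ds\). Fubini then gives
\[
\frac{1}{\psi(\lambda)}=\frac{\psi^*(\lambda)}{\lambda}
= b^*+\int_0^\infty e^{-\lambda s}\bigl(a^*+\nu^*((s,\infty))\bigr)\,ds.
\]
So I take \(b=b^*\) and \(u(s)=a^*+\nu^*((s,\infty))\). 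The function \(u\) is nonincreasing and right-continuous by construction. It is locally integrable near \(0\) because \(\int_0^1\nu^*((s,\infty))\,ds=\int (t\wedge1)\,\nu^*(dt)\) is finite.

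I expect the main obstacle to be the bookkeeping at the boundary, not the representation itself. In part (i), the left endpoint needs justification: I would use monotone convergence as \(\epsilon\downarrow0\) in \(g(x)-g(\epsilon)=\int_\epsilon^x g'\). In part (ii), the normalization \(\psi(0)=0\) needs care. The representation above is valid for every \(\psi\in\SBF\) with \(\psi\not\equiv0\). The hypothesis \(\psi(0)=0\) only ensures that the potential measure is infinite, which is reflected in \(a^*=\lim_{\lambda\to0}\lambda/\psi(\lambda)\) possibly being positive. Alternatively, one could simply cite \cite[Thm.~10.3 and Cor.~10.6]{schilling-song-vondracek}, since the statement is flagged as standard.
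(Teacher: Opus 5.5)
Your argument is correct and is essentially the same as the paper's, which gives no proof of its own beyond citing \cite{schilling-song-vondracek}; you supply the standard derivation behind that reference. Part (i) is L\'evy--Khintchine from Bernstein--Widder applied to \(g'\), with the monotone-convergence step at \(0^+\) and the bound \(1-e^{-t}\ge(1-e^{-1})(1\wedge t)\); part (ii) obtains the nonincreasing potential density \(u(t)=a^*+\nu^*((t,\infty))\) by dividing the representation of \(\psi^*(\lambda)=\lambda/\psi(\lambda)\) by \(\lambda\), and you correctly observe that \(\psi(0)=0\) is not needed for the representation itself.
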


\noindent
The Laplace-transform characterization of completely monotone functions used
below is the Bernstein--Widder theorem; we use Widder's classical form and the
modern Bernstein-function reference \cite{widder,schilling-song-vondracek}.

\begin{theorem}[Bernstein--Widder]\label{thm:bw}
\(f\in\CM\) if and only if there is a unique positive Radon measure \(\mu\) on
\([0,\infty)\) such that
\begin{equation}
f(x)=\int_{[0,\infty)}e^{-xt}\,d\mu(t).
\end{equation}
The representing measure satisfies
\begin{equation}
\int_{[0,\infty)}e^{-xt}\,d\mu(t)<\infty,\qquad x>0.
\end{equation}
If \(\mu\) is finite, then \(f(0^+)=\mu([0,\infty))\) and
\(f(\infty)=\mu(\{0\})\). Hence \(f\) is a normalized survival transform
(\(f(0^+)=1\), \(f(\infty)=0\)) exactly when \(\mu\) is a probability measure
with no atom at \(0\).
\end{theorem}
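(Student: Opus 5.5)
The plan treats the two directions separately and then derives the normalization statements from the representation.

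\emph{Sufficiency.} Suppose \(f(x)=\int e^{-xt}\,d\mu(t)\) with \(\mu\) a positive measure whose Laplace transform is finite for every \(x>0\). Fix \(x>0\) and choose \(0<x_0<x\). Then \(t^n e^{-xt}\le C_n e^{-x_0 t}\) on \([0,\infty)\), with \(C_n\) independent of \(t\). Dominated convergence therefore allows differentiation under the integral any number of times, giving
\[
(-1)^n f^{(n)}(x)=\int t^n e^{-xt}\,d\mu(t)\ge0 .
\]
Hence \(f\in\CM\).

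\emph{Necessity.} This is the substantive direction, and I would follow Widder's classical route. Let \(f\in\CM\). For each \(n\) and \(x>0\), the \((-1)^nf^{(n)}\) are nonnegative and monotone, and \(f\) is bounded on \([x,\infty)\). Taylor's formula with integral remainder, applied on \([x,\infty)\), then shows that
\[
(-1)^n f^{(n)}(x)\,x^n/n!\to0 \quad\text{as } x\to\infty .
\]
Using this, I would form the Post--Widder approximants
\[
f_k(x)=\int_{[0,\infty)} \Bigl(1-\frac{xt}{k}\Bigr)_+^{k-1}\,d\mu_k(t),
\]
where \(\mu_k\) is the positive measure with density \((-1)^k f^{(k)}(k/t)(k/t)^{k+1}/k!\) relative to \(dt/t\), or equivalently the pushforward of the measure \((-1)^k f^{(k)}(s)s^{k-1}/(k-1)!\,ds\) under \(t=k/s\). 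The representation \(f(x)-f(\infty)=\int_x^\infty (s-x)^{k-1}(-1)^kf^{(k)}(s)\,ds/(k-1)!\) is Taylor's formula at infinity; it holds because the boundary terms vanish by the decay estimate.

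Next I would show that the \(\mu_k\) have locally uniformly bounded mass, with the normalization weighted by \(e^{-x_0t}\). Helly's selection theorem (vague compactness) then produces a subsequential limit \(\mu\). Since \((1-xt/k)_+^{k-1}\to e^{-xt}\) uniformly on \([0,\infty)\), passing to the limit gives \(f(x)=\int e^{-xt}\,d\mu(t)\). The constant \(f(\infty)\) is absorbed as an atom at \(0\).

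\emph{Main obstacle.} The hard step is controlling mass escaping to infinity, i.e.\ tightness in the weighted sense. I would handle it by evaluating at \(x/2\): the integrals \(\int e^{-xt/2}\,d\mu_k\) stay bounded uniformly in \(k\), and this gives uniform integrability of the tails for \(e^{-xt}\). Uniqueness of \(\mu\) then follows because two measures with the same Laplace transform on \((0,\infty)\) coincide. To see this, multiply by \(e^{-x_0t}\) to reduce to finite measures. Those finite measures share all values at \(x_0+n\), \(n\in\mathbb{N}\), which are the moments of their pushforwards under \(t\mapsto e^{-t}\) on \([0,1]\). Moments on \([0,1]\) determine the measure, by Weierstrass approximation.

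\emph{Remaining claims.} If \(\mu\) is finite, monotone convergence as \(x\downarrow0\) gives \(f(0^+)=\mu([0,\infty))\). As \(x\to\infty\), dominated convergence, using the bound \(1\) and the pointwise limit \(e^{-xt}\to\mathbf 1_{\{t=0\}}\), gives \(f(\infty)=\mu(\{0\})\). The normalized-survival characterization then reads off directly: \(f(0^+)=1\) and \(f(\infty)=0\) hold exactly when \(\mu\) has total mass \(1\) and no atom at \(0\).
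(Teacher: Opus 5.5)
The paper does not prove this statement; it cites the classical theorem \cite{widder,schilling-song-vondracek}, so your proposal has to stand on its own. Most of it does: sufficiency, the Taylor-at-infinity identity \(f(x)-f(\infty)=\int(1-xt/k)_+^{k-1}\,d\mu_k(t)\), uniqueness via moments of the push-forward under \(t\mapsto e^{-t}\), and the boundary values are fine.

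\textbf{The gap.} It sits exactly at the step you flag as the main obstacle. You claim that \(\int e^{-yt}\,d\mu_k(t)\) stays bounded uniformly in \(k\), with \(y=x/2\), or \(y=x_0\) in your Helly normalization. This is false, and these integrals can be \(+\infty\) for every \(k\). Indeed
\[
\int e^{-yt}\,d\mu_k(t)=\int_0^\infty e^{-yk/s}(-1)^kf^{(k)}(s)\,\frac{s^{k-1}}{(k-1)!}\,ds,
\]
and complete monotonicity does not stop \((-1)^kf^{(k)}(s)\) from outgrowing every \(e^{C/s}\) as \(s\downarrow0\).

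A concrete example: take \(f(x)=\int_0^\infty e^{-xr}e^{r^{9/10}}\,dr\). Then \(\mu_k\) is the mixture, against \(e^{r^{9/10}}\,dr\), of the laws of \(kr/G_k\), where \(G_k\) is a sum of \(k\) independent unit exponentials. Since \(\Prob(G_k\ge g)\ge e^{-g}\), one gets \(\E e^{-ykr/G_k}\ge e^{-2\sqrt{ykr}}\). Hence \(\int e^{-yt}\,d\mu_k(t)\ge\int_0^\infty e^{-2\sqrt{ykr}+r^{9/10}}\,dr=\infty\) for every \(k\) and every \(y>0\).

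Evaluating the identity at \(x/2\) cannot produce exponential moments, because \((1-xt/(2k))_+^{k-1}\le e^{-xt(k-1)/(2k)}\) points the wrong way. So Helly's theorem applied to \(e^{-x_0t}\mu_k\) has nothing to act on.

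\textbf{The repair.} What the identity at a smaller point \(0<x_0<x\) does give is tail control through the ratio of kernels. For \(0\le t<k/x\),
\[
\Bigl(\frac{1-xt/k}{1-x_0t/k}\Bigr)^{k-1}\le\Bigl(1-\frac{(x-x_0)t}{k}\Bigr)^{k-1}\le e^{-(k-1)(x-x_0)t/k},
\]
so \(\int_{(T,\infty)}(1-xt/k)_+^{k-1}\,d\mu_k(t)\le e^{-(k-1)(x-x_0)T/k}\bigl(f(x_0)-f(\infty)\bigr)\), uniformly in \(k\). Combine this with the local bounds \(\mu_k([0,T])\le 2e^{xT}\bigl(f(x)-f(\infty)\bigr)\) for large \(k\), which you correctly anticipate. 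Vague compactness and the uniform convergence of the kernels then give \(f(x)-f(\infty)=\int e^{-xt}\,d\mu(t)\).

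Two equivalent routes also work:
\begin{enumerate}[label=\textup{(\roman*)},leftmargin=2.3em]
\item Apply Helly to the finite measures \((1-x_0t/k)_+^{k-1}\mu_k\), whose mass is exactly \(f(x_0)-f(\infty)\).
\item Follow the textbook route. First assume \(f(0^+)<\infty\); then \(\mu_k([0,\infty))=f(0^+)-f(\infty)\), and mass escaping to infinity is harmless because \(e^{-xt}\to0\). Then handle general \(f\) through the shifts \(f(\cdot+\varepsilon)\), glued together by uniqueness.
\end{enumerate}

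\textbf{Two small slips.} First, the density \((-1)^kf^{(k)}(k/t)(k/t)^{k+1}/k!\) is with respect to \(dt\), not \(dt/t\); your push-forward description is the correct one. Second, \(f(0^+)=\mu([0,\infty))\) holds by monotone convergence without assuming \(\mu\) finite. That is what lets \(f(0^+)=1\) force \(\mu\) to be a probability measure in the last claim.
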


We use three elementary lemmas repeatedly.

\begin{lemma}[Positive-kernel principle]\label{lem:poskernel}
Let $d\mu(t)\ge0$ and suppose
\begin{equation}
f(x)=\int_0^\infty e^{-xt}\,d\mu(t)
\end{equation}
converges on $(0,\infty)$. Then $f\in\CM$. If \(\mu\ne0\) and
\(\mu((0,\infty))>0\), then \(f\) is strictly completely monotone.
\end{lemma}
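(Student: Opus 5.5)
The plan is to differentiate under the integral sign and read off the signs; the only real work is justifying the interchange of derivative and integral. Fix \(x_0>0\) and take \(x\ge x_0\). For every \(n\ge0\) the differentiated integrand \(t^n e^{-xt}\) is dominated, uniformly in \(x\ge x_0\), by \(t^n e^{-x_0 t}\). To see that this dominating function is \(\mu\)-integrable, split it as
\[
t^n e^{-x_0 t}=\bigl(t^n e^{-x_0 t/2}\bigr)e^{-x_0 t/2}.
\]
The first factor is bounded by \(C_n=(2n/(e x_0))^n\), and the second is integrable against \(\mu\) because \(f(x_0/2)<\infty\). Dominated convergence, applied inductively in \(n\) to the difference quotients (the mean value theorem reduces these to the same domination), then gives \(f\in C^\infty(0,\infty)\) and
\[
(-1)^n f^{(n)}(x)=\int_0^\infty t^n e^{-xt}\,d\mu(t).
\]
Since \(\mu\ge0\) and the integrand is nonnegative, \((-1)^n f^{(n)}(x)\ge0\) for all \(n\) and \(x>0\). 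Hence \(f\in\CM\) in the sense of Definition~\ref{def:classes}(i).

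For strictness, assume \(\mu((0,\infty))>0\). For \(n\ge1\) the integrand \(t^n e^{-xt}\) is strictly positive on \((0,\infty)\), and a nonnegative function that is strictly positive on a set of positive measure has strictly positive integral. This gives \((-1)^n f^{(n)}(x)>0\). For \(n=0\) we have \(f(x)\ge\int_{(0,\infty)}e^{-xt}\,d\mu(t)>0\) by the same reasoning, and any atom at \(0\) contributes only nonnegatively. Note that if \(\mu\) were concentrated at \(0\), the derivatives of order \(n\ge1\) would vanish identically; this is exactly why the lemma needs the hypothesis \(\mu((0,\infty))>0\).

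The main obstacle is the domination step. Convergence of the integral defining \(f\) is assumed only pointwise on \((0,\infty)\), so integrability of \(t^n e^{-x_0 t}\) against \(\mu\) must be extracted from finiteness at the smaller point \(x_0/2\). The factorization above does exactly this, and everything else is routine.
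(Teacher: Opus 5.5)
Your proof is correct and takes the same route as the paper. Both differentiate under the integral sign to get $(-1)^n f^{(n)}(x)=\int_0^\infty t^n e^{-xt}\,d\mu(t)$ and read off the signs, with strictness for $n\ge1$ coming from $\mu((0,\infty))>0$. The only difference is that you justify the interchange, via the bound $t^n e^{-x_0 t}\le C_n e^{-x_0 t/2}$ and the finiteness of $f(x_0/2)$; the paper leaves this step implicit.
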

\begin{proof}
Differentiation under the integral sign gives
\begin{equation}
(-1)^n f^{(n)}(x)=\int_0^\infty t^n e^{-xt}\,d\mu(t)\ge0.
\end{equation}
For \(n=0\) the integral is positive when \(\mu\ne0\). For \(n\ge1\), the
integral is positive exactly when \(\mu((0,\infty))>0\). Thus the only nonzero
constant obstruction to strictness is a measure supported at \(0\).
\end{proof}

\begin{lemma}[Covariance sign for monotone rearrangements]\label{lem:cov}
Let $\nu$ be a probability measure on $\R$, let $\phi$ be nondecreasing and
$\psi$ nonincreasing on $\supp\nu$, and let $T\sim\nu$. Then
$\Cov(\phi(T),\psi(T))\le0$, provided the moments exist.
\end{lemma}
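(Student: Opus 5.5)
The plan is to use the classical symmetrization (Chebyshev's ``other'' inequality) argument. Let \(T,T'\) be independent copies with law \(\nu\), both supported in \(\supp\nu\). Since \(\phi\) is nondecreasing and \(\psi\) is nonincreasing on \(\supp\nu\), for every pair \(t,t'\in\supp\nu\) the factors \(\phi(t)-\phi(t')\) and \(\psi(t)-\psi(t')\) have opposite signs (or one vanishes). Hence
\begin{equation}
\bigl(\phi(T)-\phi(T')\bigr)\bigl(\psi(T)-\psi(T')\bigr)\le0
\end{equation}
pointwise almost surely.

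The next step is to take expectations and expand. Because \(T\) and \(T'\) are independent and identically distributed,
\begin{equation}
\E\bigl[(\phi(T)-\phi(T'))(\psi(T)-\psi(T'))\bigr]
=2\E[\phi(T)\psi(T)]-2\E[\phi(T)]\,\E[\psi(T)]
=2\Cov(\phi(T),\psi(T)).
\end{equation}
This uses \(\E[\phi(T)\psi(T')]=\E[\phi(T)]\E[\psi(T')]\) and symmetry. Combining this with the pointwise inequality gives \(\Cov(\phi(T),\psi(T))\le0\).

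The only delicate step is integrability, which is what ``provided the moments exist'' covers. I read that hypothesis as \(\phi(T),\psi(T)\in L^1\) and \(\phi(T)\psi(T)\in L^1\), so that the covariance is defined. Then each of the four terms in the expansion is integrable: the cross terms factor by independence into products of finite first moments. This makes the linear expansion of the expectation legitimate.

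If one prefers to avoid even that bookkeeping, an alternative is to truncate \(\phi\) and \(\psi\) by monotone clipping at levels \(\pm M\), which preserves monotonicity. One applies the bounded case and then lets \(M\to\infty\), using dominated convergence under the assumed moments. I expect no real obstacle beyond this justification.
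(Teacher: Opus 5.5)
Your proposal is correct and is the paper's argument: an independent copy $T'$, the identity $\Cov(\phi(T),\psi(T))=\tfrac12\E\bigl[(\phi(T)-\phi(T'))(\psi(T)-\psi(T'))\bigr]$, and pointwise nonpositivity of the integrand on $\supp\nu$. Your integrability bookkeeping, and the optional truncation, only spell out what the paper leaves to the phrase ``provided the moments exist.''
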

\begin{proof}
With $T,T'$ i.i.d.\ $\sim\nu$,
\begin{equation}
\Cov(\phi(T),\psi(T))=\tfrac12\,\E\!\left[(\phi(T)-\phi(T'))(\psi(T)-\psi(T'))\right].
\end{equation}
The integrand is $\le0$ pointwise, since $\phi$ and $\psi$ move in opposite
directions along $T-T'$.
\end{proof}

\begin{lemma}[Inverse-ODE sign induction]\label{lem:invode}
Let $g\in\CM$ on an interval $I$, and let $y\colon(0,\infty)\to I$ be a positive
$C^\infty$ solution of $y'=g(y)$ with $y'>0$. Then $y\in\BF$; that is,
$(-1)^{n}y^{(n+1)}\ge0$ for all $n\ge0$. If $g$ is strictly completely monotone,
the inequalities are strict.
\end{lemma}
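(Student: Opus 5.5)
We prove by induction on \(n\ge1\) a structural statement that is stronger than the sign claim: for every \(n\ge1\) there is a finite linear combination, with nonnegative integer coefficients, of products
\begin{equation}
g^{(k_1)}(y)\,g^{(k_2)}(y)\cdots g^{(k_m)}(y),\qquad m=n,\quad k_1+\cdots+k_m=n-1,
\end{equation}
such that \(y^{(n)}\) equals that combination times \((-1)^{n-1}\). The sign claim then follows from a parity count: each factor satisfies \((-1)^{k_i}g^{(k_i)}\ge0\) because \(g\in\CM\), so each product has sign \((-1)^{\sum k_i}=(-1)^{n-1}\). Hence \((-1)^{n-1}y^{(n)}\ge0\), which is the asserted \((-1)^n y^{(n+1)}\ge0\) after reindexing. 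Positivity of \(y\) and \(y'>0\) together with the \(C^\infty\) hypothesis then give \(y\in\BF\) (\(y\) is \(C^\infty\), nonnegative, and \(y'\in\CM\)).

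The base case \(n=1\) is just \(y'=g(y)\): here \(m=1\) and \(k_1=0\). For the inductive step, differentiate one product using \(\frac{d}{dx}g^{(k)}(y)=g^{(k+1)}(y)\,y'=g^{(k+1)}(y)\,g(y)\). By the Leibniz rule, the derivative of a product with \(m\) factors and index sum \(n-1\) is a sum of \(m\) terms. In each term one index rises by \(1\) and a new factor \(g^{(0)}(y)\) is appended, so the factor count becomes \(n+1\), the index sum becomes \(n\), and the coefficients stay nonnegative integers. Tracking the sign, \((-1)^{n-1}\) times a combination whose products have signs \((-1)^{n}\) gives the form required at \(n+1\). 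This is essentially the Faà di Bruno / chain-rule bookkeeping the introduction alludes to.

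For strictness, suppose \(g\) is strictly CM, so \((-1)^kg^{(k)}>0\). It then suffices that at least one product appears with a positive coefficient. I would exhibit the product \(g'(y)^{n-1}g(y)\), which appears with coefficient \((n-1)!\). A simpler route is to observe that differentiating the factor \(g^{(0)}\) of the base term repeatedly always produces nonzero terms, and that no cancellation is possible because all terms carry the same sign. Either way, a sum of same-signed terms with at least one strictly nonzero term is strictly signed.

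The main obstacle is only the careful statement of the induction invariant, namely the factor count and index sum, so that the parity argument is airtight. A minor technical point is that \(g\) is CM only on \(I\). We use the fact that \(y\) maps into \(I\), so every \(g^{(k)}(y(x))\) is evaluated where the sign conditions hold. No Bernstein–Widder representation is needed.
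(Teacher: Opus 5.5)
Your argument is correct in substance but organizes the induction differently from the paper. The paper runs a strong induction on the signs of the derivatives of \(y\) themselves. It applies Fa\`a di Bruno's formula to \(y^{(n+1)}=(g\circ y)^{(n)}\), obtaining terms \(g^{(k)}(y)\prod_i y^{(j_i)}\) with \(\sum_i j_i=n\). It then signs each term as \((-1)^k(-1)^{n-k}\) using the inductive hypothesis \((-1)^{j-1}y^{(j)}\ge0\) for \(j\le n\). You instead eliminate all derivatives of \(y\) by substituting \(y'=g(y)\) at every step. Then \(y^{(n)}\) is an explicit polynomial in \(g(y),\dots,g^{(n-1)}(y)\) with nonnegative integer coefficients, homogeneous of degree \(n\) and weight \(n-1\). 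Your induction is therefore purely structural: it carries no sign hypothesis and needs only the chain and Leibniz rules rather than Fa\`a di Bruno. The sign is read off once, by parity, at the end. The paper's version is shorter once Fa\`a di Bruno is granted. Yours is self-contained and makes strictness transparent, since every monomial is strictly signed when \(g\) is strictly completely monotone.

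Two slips should be fixed. First, the invariant must say that \(y^{(n)}\) \emph{equals} the nonnegative combination, with no prefactor \((-1)^{n-1}\). As written it is false already for \(n=2\), where \(y''=g'(y)g(y)\). Combined with your parity count, it would also give \((-1)^{n-1}y^{(n)}\) the sign \((-1)^{n-1}\), the opposite of what you conclude. The sign is carried entirely by the factors \(g^{(k_i)}(y)\), so drop the sentence about ``tracking the sign'' in the inductive step. The Leibniz step only has to preserve nonnegativity of the coefficients, which it does.

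Second, the coefficient of \(g'(y)^{n-1}g(y)\) is \(1\), not \((n-1)!\). This monomial arises only by differentiating the unique factor \(g(y)\) in \(g'(y)^{n-2}g(y)\); for instance \(y'''=g''(y)g(y)^2+g'(y)^2g(y)\). The number \((n-1)!\) is the sum of all the coefficients. Since any positive coefficient suffices, strictness is unaffected, and your second justification for it is fine as stated.
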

\begin{proof}
We show $(-1)^{n-1}y^{(n)}\ge0$ for $n\ge1$ by induction. The base case $y'>0$
is the hypothesis. Assume it holds for $1\le j\le n$. Differentiating
$y^{(n+1)}=(g\circ y)^{(n)}$ by Fa\`a di Bruno's formula \cite{comtet}
expresses $y^{(n+1)}$ as a sum of
terms
\begin{equation}
g^{(k)}(y)\prod_{i} y^{(j_i)},\qquad \sum_i j_i = n,\ \ k=\#\{i\},
\end{equation}
each with a nonnegative combinatorial coefficient. The factor $g^{(k)}(y)$ has
sign $(-1)^k$, and $\prod_i y^{(j_i)}$ has sign $\prod_i(-1)^{j_i-1}
=(-1)^{n-k}$. The product has sign $(-1)^k(-1)^{n-k}=(-1)^n$, so
$(-1)^n y^{(n+1)}\ge0$. Strictness propagates when
\((-1)^k g^{(k)}>0\) on \(I\) for every \(k\ge0\) and the preceding induction
inequalities are strict.
\end{proof}

\section{Canonical representations}\label{sec:representations}

\subsection{The Prabhakar \texorpdfstring{$Q$}{Q}-measure as a stable subordination}

Let $0<\alpha<1$, $\eta>0$, $\beta>\alpha\eta$, and recall the Prabhakar
(three-parameter Mittag-Leffler) function \cite{mittag-leffler,prabhakar}
\begin{equation}
E_{\alpha,\beta}^{\eta}(z)=\sum_{m\ge0}\frac{(\eta)_m\,z^m}{m!\,\Gamma(\alpha
m+\beta)} .
\end{equation}
Here \((a)_m\) is the rising Pochhammer symbol, with \((a)_0=1\) and
\((a)_m=\Gamma(a+m)/\Gamma(a)\).
Sibisi \cite{sibisi-prabhakar}, building on Pollard's complete-monotonicity
theorem \cite{pollard} and the Feller stable-law proof \cite{feller},
introduced a transform-normalized Pollard measure
$P_{\alpha,\beta}^{\eta}$ characterized in the strict Pollard range by
\(\int_0^\infty e^{-xr}\,dP_{\alpha,\beta}^{\eta}(r)=E_{\alpha,\beta}^{\eta}(-x)\).
The complete monotonicity and integral representations of Prabhakar functions
have substantial prior literature
\cite{gorenflo-kilbas-mainardi-rogozin,mainardi-garrappa,garra-garrappa}.
The point here is narrower: we identify the associated \(Q\)-measure as the
stable-subordination push-forward of \(P_{\alpha,\beta}^{\eta}\).
We use Sibisi's unnormalized Prabhakar convention
\begin{equation}
E_{\alpha,\beta}^{\eta}(0)=\frac1{\Gamma(\beta)};
\end{equation}
hence the representing object below is a finite measure of mass
\(1/\Gamma(\beta)\), and multiplying by \(\Gamma(\beta)\) gives the associated
probability law.

\begin{lemma}[Stable-subordination normal form]\label{lem:stablesub}
Let $P$ be a finite positive measure on $[0,\infty)$ with Laplace transform
$F(u)=\int_0^\infty e^{-ur}\,dP(r)$, let $0<\alpha<1$ and $\lambda>0$, and let
$S_\alpha$ be the positive $\alpha$-stable variable with
$\E e^{-sS_\alpha}=e^{-s^\alpha}$. Define the finite measure
\begin{equation}
Q(A)=\int_{[0,\infty)}\Prob\{(\lambda r)^{1/\alpha}S_\alpha\in A\}\,dP(r).
\end{equation}
Then $\int_0^\infty e^{-xt}\,dQ(t)=F(\lambda x^\alpha)$ for all $x>0$.
\end{lemma}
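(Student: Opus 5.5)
The plan is to compute the Laplace transform of \(Q\) by unfolding its definition as a mixture, conditioning on \(r\), and using the scaling property of \(S_\alpha\). Write \(Q=\int \kappa_r\,dP(r)\), where \(\kappa_r\) is the law of \((\lambda r)^{1/\alpha}S_\alpha\); for \(r=0\) this is the Dirac mass at \(0\). Since \(P\) is finite and each \(\kappa_r\) is a probability measure, \(Q\) is a finite positive measure of total mass \(P([0,\infty))\). For the nonnegative measurable function \(t\mapsto e^{-xt}\), the definition of \(Q\) extends from indicators to simple functions by linearity, and then to nonnegative measurable functions by monotone convergence. This gives
\begin{equation}
\int_0^\infty e^{-xt}\,dQ(t)=\int_{[0,\infty)}\E\!\left[e^{-x(\lambda r)^{1/\alpha}S_\alpha}\right]dP(r).
\end{equation}

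Next I evaluate the inner expectation. I apply \(\E e^{-sS_\alpha}=e^{-s^\alpha}\) with \(s=x(\lambda r)^{1/\alpha}\ge0\), which gives
\begin{equation}
\E\!\left[e^{-x(\lambda r)^{1/\alpha}S_\alpha}\right]=\exp\!\bigl(-x^\alpha\lambda r\bigr),
\end{equation}
since \(s^\alpha=x^\alpha\lambda r\). This identity also holds at \(r=0\), where both sides equal \(1\). Substituting back, the right-hand side becomes \(\int e^{-(\lambda x^\alpha) r}\,dP(r)=F(\lambda x^\alpha)\). This value is finite because \(P\) is finite and \(\lambda x^\alpha>0\).

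The only step needing care is measurability of \(r\mapsto\kappa_r(A)\), so that \(Q\) is well defined and the monotone-class extension is legitimate. I would handle this by writing \(\kappa_r(A)=\Prob\{(\lambda r)^{1/\alpha}S_\alpha\in A\}\) as \(\int \mathbf 1_A((\lambda r)^{1/\alpha}s)\,\Prob_{S_\alpha}(ds)\). The integrand is jointly Borel in \((r,s)\), so Tonelli's theorem gives measurability in \(r\). Tonelli also justifies the interchange of integrals above. Beyond this, the argument is a routine Fubini–Tonelli computation.
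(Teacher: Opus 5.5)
Your proposal is correct and follows essentially the same route as the paper: apply Tonelli's theorem to unfold the mixture defining \(Q\), then use the stable Laplace identity with \(s=x(\lambda r)^{1/\alpha}\) to get \(e^{-\lambda r x^\alpha}\) and hence \(F(\lambda x^\alpha)\). Your added checks (joint measurability of \((r,s)\mapsto\mathbf 1_A((\lambda r)^{1/\alpha}s)\) and the \(r=0\) Dirac case) are details the paper leaves implicit.
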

\begin{proof}
By Tonelli's theorem and the stable Laplace identity,
\begin{equation}
\int_0^\infty e^{-xt}\,dQ(t)
=\int_{[0,\infty)}\E e^{-x(\lambda r)^{1/\alpha}S_\alpha}\,dP(r)
=\int_{[0,\infty)}e^{-\lambda r x^\alpha}\,dP(r)
=F(\lambda x^\alpha). \qedhere
\end{equation}
\end{proof}

\begin{theorem}[Prabhakar $Q$-measure by stable subordination]\label{thm:prabhakar}
Let $0<\alpha<1$, $\eta>0$, $\beta>\alpha\eta$, $\lambda>0$, and define
\begin{equation}
Q_{\alpha,\beta}^{\eta}(A\mid\lambda)
=\int_{[0,\infty)}\Prob\{(\lambda r)^{1/\alpha}S_\alpha\in A\}\,
dP_{\alpha,\beta}^{\eta}(r).
\end{equation}
Then
\begin{equation}
\int_0^\infty e^{-xt}\,dQ_{\alpha,\beta}^{\eta}(t\mid\lambda)
=E_{\alpha,\beta}^{\eta}(-\lambda x^\alpha),\qquad x>0,
\end{equation}
and by uniqueness of finite Laplace transforms this is the source
$Q$-measure described above.
Its total mass is $1/\Gamma(\beta)$; the normalized law is
$\widehat Q_{\alpha,\beta}^{\eta}=\Gamma(\beta)\,Q_{\alpha,\beta}^{\eta}$.
\end{theorem}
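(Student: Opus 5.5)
The proof is a direct specialization of Lemma~\ref{lem:stablesub} with \(P=P_{\alpha,\beta}^{\eta}\). First I confirm the lemma's hypotheses. In the strict Pollard range \(0<\alpha<1\), \(\eta>0\), \(\beta>\alpha\eta\), Sibisi's measure \(P_{\alpha,\beta}^{\eta}\) is a positive measure on \([0,\infty)\) with Laplace transform
\begin{equation}
F(u)=\int_0^\infty e^{-ur}\,dP_{\alpha,\beta}^{\eta}(r)=E_{\alpha,\beta}^{\eta}(-u),\qquad u>0.
\end{equation}
It is finite because monotone convergence as \(u\downarrow0\) gives total mass \(F(0^+)\). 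By continuity of the entire function \(E_{\alpha,\beta}^{\eta}\) at \(0\), this mass equals \(E_{\alpha,\beta}^{\eta}(0)=1/\Gamma(\beta)\) in the unnormalized convention.

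Next I apply Lemma~\ref{lem:stablesub} with this \(P\) and the given \(\lambda>0\). The measure defined in the theorem is exactly the \(Q\) of the lemma, so the lemma gives
\begin{equation}
\int_0^\infty e^{-xt}\,dQ_{\alpha,\beta}^{\eta}(t\mid\lambda)=F(\lambda x^\alpha)=E_{\alpha,\beta}^{\eta}(-\lambda x^\alpha).
\end{equation}
This is the Laplace identity asserted in the theorem.

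For the total mass, I use that each kernel \(A\mapsto\Prob\{(\lambda r)^{1/\alpha}S_\alpha\in A\}\) is a probability measure on \([0,\infty)\). Taking \(A=[0,\infty)\) then gives
\begin{equation}
Q_{\alpha,\beta}^{\eta}([0,\infty)\mid\lambda)=P_{\alpha,\beta}^{\eta}([0,\infty))=\frac{1}{\Gamma(\beta)}.
\end{equation}
Consequently \(\Gamma(\beta)\,Q_{\alpha,\beta}^{\eta}\) is a probability law. One small check is that the atom of \(P_{\alpha,\beta}^{\eta}\) at \(r=0\) (if any) is handled consistently: the kernel at \(r=0\) is \(\delta_0\), and this matches \(e^{-0\cdot x^\alpha}=1\) in the lemma's computation, which is already covered by Tonelli's theorem.

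For identification with the source \(Q\)-measure, Sibisi's \(Q\) is characterized by having Laplace transform \(E_{\alpha,\beta}^{\eta}(-\lambda x^\alpha)\). Bernstein--Widder uniqueness (Theorem~\ref{thm:bw}) says a positive measure on \([0,\infty)\) is determined by its Laplace transform on \((0,\infty)\), so the two measures coincide.

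The only real obstacle is bookkeeping with conventions. I must make sure Sibisi's \(P\) is transform-normalized in the same sense as \(F(u)=E_{\alpha,\beta}^{\eta}(-u)\) with \(\lambda\) external, rather than with \(\lambda\) absorbed into \(P\). I also need the stable normalization \(\E e^{-sS_\alpha}=e^{-s^\alpha}\), which yields the scaling \((\lambda r)^{1/\alpha}\). I will check both by comparing the first moments or the small-\(x\) expansion at the level of the identity above.
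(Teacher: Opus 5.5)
Your proposal is correct and follows the paper's proof. Like the paper, you specialize Lemma~\ref{lem:stablesub} to $P=P_{\alpha,\beta}^{\eta}$ with $F(u)=E_{\alpha,\beta}^{\eta}(-u)$, identify the result with the source $Q$-measure by uniqueness of Laplace transforms, and read off the total mass $1/\Gamma(\beta)$ from $E_{\alpha,\beta}^{\eta}(0)$; your explicit checks that $P_{\alpha,\beta}^{\eta}$ is finite and that each stable kernel is a probability measure only spell out what the paper uses implicitly.
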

\begin{proof}
Apply Lemma~\ref{lem:stablesub} with $P=P_{\alpha,\beta}^{\eta}$ and
$F(u)=E_{\alpha,\beta}^{\eta}(-u)$; the range condition $\beta>\alpha\eta$ is
exactly Sibisi's strict Pollard range in which $P_{\alpha,\beta}^{\eta}$ is a
positive measure. Uniqueness of finite Laplace transforms identifies the result
with $Q_{\alpha,\beta}^{\eta}(\cdot\mid\lambda)$. Since
$E_{\alpha,\beta}^{\eta}(0)=1/\Gamma(\beta)$, the representing measures have
total mass $1/\Gamma(\beta)$. If $S_\alpha$ has density $f_\alpha$ and
$P_{\alpha,\beta}^{\eta}$ has Pollard density $p_{\alpha,\beta}^{\eta}$, the
absolutely continuous part of $Q$ is
\begin{equation}
q_{\alpha,\beta}^{\eta}(t\mid\lambda)
=\int_0^\infty (\lambda r)^{-1/\alpha}
f_\alpha\!\left(\frac{t}{(\lambda r)^{1/\alpha}}\right)
p_{\alpha,\beta}^{\eta}(r)\,dr,
\end{equation}
with an atom $P_{\alpha,\beta}^{\eta}(\{0\})\delta_0$ if present. The boundary
$\beta=\alpha\eta$ is excluded, as Sibisi's ordinary-convolution density uses
$1/\Gamma(\beta-\alpha\eta)$; its limiting treatment is separate.
\end{proof}

\begin{remark}[Source problem]
Theorem~\ref{thm:prabhakar} gives a stable-subordination representation for
the \(Q\)-measure associated with Sibisi's Prabhakar--Pollard construction
\cite{sibisi-prabhakar}. Analytically, this is Bochner subordination:
\(x\mapsto x^\alpha\) is the Laplace exponent of \(S_\alpha\), so replacing
\(u\) by \(\lambda x^\alpha\) in a Laplace transform is the corresponding
push-forward under the stable subordinator. The proof uses the stable Laplace
identity as its only special-function input.
\end{remark}

\subsection{Laplace-survival and Poissonization}

Mecke's posthumously compiled manuscripts \cite{mecke-nagel-weiss} pose two
construction problems: realize a random variable whose distribution function is
$1-L_\zeta$, and realize a count whose probability generating function is a
Laplace transform of $\zeta$. We use the standard exponential-interarrival
construction of a Poisson process \cite{kingman-poisson}.

\begin{theorem}[Laplace-survival and Poissonization]\label{thm:mecke}
Let $\zeta\ge0$ with Laplace transform $L_\zeta(x)=\E e^{-x\zeta}$, and let $U$
be uniform on $(0,1)$ independent of $\zeta$.
\begin{enumerate}
\item[(i)] If $\Prob(\zeta>0)=1$, then $\xi=(-\log U)/\zeta$ satisfies
$F_\xi(x)=1-L_\zeta(x)$ for all $x\ge0$. For arbitrary $\zeta\ge0$ the same
holds on the extended half-line with $\xi=\infty$ on $\{\zeta=0\}$, and
\emph{no} finite-valued $\xi$ can satisfy the literal identity for all $x\ge0$
when $\Prob(\zeta=0)>0$.
\item[(ii)] If $\kappa_t\mid\zeta\sim\Pois(t\zeta)$, then
$\E[x^{\kappa_t}]=L_\zeta(t(1-x))$ for $0\le x\le1$, and $\kappa_t$ is realizable
from i.i.d.\ uniforms via the exponential-interarrival construction of a
Poisson process.
\end{enumerate}
\end{theorem}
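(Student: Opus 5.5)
Part (i) is handled by conditioning on \(\zeta\), and part (ii) by conditioning as well, so both are short exponential-race computations.

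For (i), set \(E=-\log U\), which is standard exponential and independent of \(\zeta\). On \(\{\zeta>0\}\), conditionally on \(\zeta=z\), the variable \(\xi=E/z\) is exponential with rate \(z\), so \(\Prob(\xi>x\mid\zeta=z)=e^{-xz}\) for \(x\ge0\). Taking expectations by Tonelli gives
\begin{equation}
\Prob(\xi>x)=\E e^{-x\zeta}=L_\zeta(x),
\end{equation}
that is, \(F_\xi(x)=1-L_\zeta(x)\).

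For general \(\zeta\ge0\), define \(\xi=\infty\) on \(\{\zeta=0\}\). The conditional survival probability on that event is \(1=e^{-x\cdot0}\), so the same identity holds with \(F_\xi(x)=\Prob(\xi\le x)\) on \([0,\infty]\).

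The obstruction is a limit argument. As \(x\to\infty\), dominated convergence gives \(L_\zeta(x)\to\Prob(\zeta=0)\). Hence \(1-L_\zeta(x)\to1-\Prob(\zeta=0)<1\) whenever \(\Prob(\zeta=0)>0\). But any finite-valued \(\xi\) has \(F_\xi(x)\to1\), so the literal identity cannot hold for all \(x\ge0\). This is exactly the atom-at-zero caveat of Theorem~\ref{thm:bw}.

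For (ii), condition on \(\zeta=z\). The Poisson probability generating function gives
\begin{equation}
\E[x^{\kappa_t}\mid\zeta=z]=e^{-tz(1-x)} \qquad\text{for } 0\le x\le1.
\end{equation}
Taking expectations yields \(L_\zeta(t(1-x))\), with interchange justified since everything is bounded by \(1\).

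For realizability, let \(U_1,U_2,\dots\) be i.i.d.\ uniforms independent of \(\zeta\). Put \(E_i=-\log U_i\), and define
\begin{equation}
\kappa_t=\#\{n\ge1:\ E_1+\dots+E_n\le t\zeta\}.
\end{equation}
Conditionally on \(\zeta=z>0\), this counts the points in \([0,t]\) of the rate-\(z\) process with interarrival times \(E_i/z\), so it is \(\Pois(tz)\) by the standard construction \cite{kingman-poisson}. On \(\{\zeta=0\}\) the count is \(0\) almost surely, which agrees with \(\Pois(0)\).

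If one insists on building \(\zeta\) itself from a uniform, the quantile transform \(\zeta=F_\zeta^{-1}(U_0)\) with an extra independent uniform \(U_0\) does this.

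The only delicate point is the measurability and conditioning bookkeeping, which is routine via regular conditional distributions. The substantive content is the negative statement in (i), and the limit \(L_\zeta(\infty)=\Prob(\zeta=0)\) settles it.
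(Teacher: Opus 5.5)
Your proposal is correct and follows essentially the same route as the paper. Part (i) conditions on $\zeta$ with $E=-\log U$ exponential and sets $\xi=\infty$ on $\{\zeta=0\}$, and the impossibility comes from the limit $L_\zeta(x)\to\Prob(\zeta=0)$; part (ii) uses the conditional Poisson generating function, and your count $\#\{n\ge1: E_1+\cdots+E_n\le t\zeta\}$ is the paper's $N(t\zeta)=\max\{n\ge0:T_n\le t\zeta\}$, with the quantile-transform remark a harmless extra.
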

\begin{proof}
Since $U$ is uniform, $E=-\log U$ is unit exponential. Conditioning on
$\zeta=z>0$, $\Prob(E/z\le x)=1-e^{-xz}$; on $\{\zeta=0\}$ the extended
convention gives $\xi=\infty$, hence $\Prob(\xi\le x\mid\zeta=0)=0=1-e^{-x\cdot0}$.
Taking expectations,
\begin{equation}
\Prob(\xi\le x)=\E[1-e^{-x\zeta}]=1-L_\zeta(x).
\end{equation}
For the impossibility, bounded convergence gives $L_\zeta(x)\to\Prob(\zeta=0)$
as $x\to\infty$, so $1-L_\zeta(x)\to1-\Prob(\zeta=0)<1$; a finite distribution
function must tend to $1$, so no finite-valued $\xi$ works when
$\Prob(\zeta=0)>0$.

For (ii), conditioning on $\zeta$,
\begin{equation}
\E[x^{\kappa_t}\mid\zeta]=\exp(t\zeta(x-1))=\exp(-t(1-x)\zeta),
\end{equation}
so $\E[x^{\kappa_t}]=\E e^{-t(1-x)\zeta}=L_\zeta(t(1-x))$. Taking i.i.d.\
uniforms $(\eta_i)$, $E_i=-\log\eta_i$, partial sums $T_n=E_1+\cdots+E_n$, and
$N(a)=\max\{n\ge0:T_n\le a\}$ realizes a standard Poisson process, and
$\kappa_t=N(t\zeta)$ is the desired conditional draw.
\end{proof}

\begin{remark}[Source problem]
The inverse direction is Bernstein--Widder (Theorem~\ref{thm:bw}): normalized
completely monotone survival functions are exactly Laplace transforms of
probability measures on $[0,\infty)$. Theorem~\ref{thm:mecke} realizes the two
Mecke--Nagel--Weiss construction questions \cite{mecke-nagel-weiss}; part (i)
gives a finite-valued realization exactly when \(\Prob(\zeta>0)=1\), and an
extended-valued realization otherwise. The atom-at-zero caveat is forced by
\(L_\zeta(\infty)=\Prob(\zeta=0)\).
\end{remark}

\section{Two affirmative conjectures}\label{sec:conjectures}

\subsection{M\"ohle's Problem 6.3 for coalescents with dust}

Let $\Delta=\{u=(u_1\ge u_2\ge\cdots\ge0):|u|=\sum_i u_i\le1\}$ be the
ranked-simplex parameter space used for \(\Xi\)-coalescents with simultaneous
multiple collisions \cite{pitman-coalescents,sagitov,schweinsberg} and for
M\"ohle's exchangeable coalescent with dust problem \cite{gaiser-mohle,mohle}:
the coordinates \(u_i\) are non-singleton family frequencies, while
\(u_0=1-|u|\) is the residual dust mass carried by singleton blocks. Given
\(u\in\Delta\), let
\((X_0,X_1,\ldots)\) have the infinite multinomial law obtained by assigning
\(n\) independent balls to boxes with probabilities
\((u_0,u_1,u_2,\ldots)\). Define
\begin{equation}
Y(n,u)=X_0+\sum_{i\ge1}\mathbf 1_{\{X_i>0\}},\qquad
\widetilde Y(n,u)=\frac{Y(n,u)}{n}-(1-|u|).
\end{equation}
Set
\begin{equation}
p_n(u)=\E\widetilde Y(n,u)-\E\bigl(\widetilde Y(n,u)^2\bigr).
\end{equation}
M\"ohle \cite{mohle} asks (Problem~6.3) whether \(p_n(u)\ge0\) for all
\(u\in\Delta\) and all integers \(n\ge1\), i.e.\ whether the second moment never
exceeds the first.

We first record the two exact integral identities that carry the proof.

\begin{lemma}[Binomial collision kernel]\label{lem:collision}
For every integer $n\ge2$ and $x,y\ge0$ with $x+y\le1$,
\begin{equation}
(1-x)^n+(1-y)^n-1-(1-x-y)^n
=-\,n(n-1)\int_0^x\!\!\int_0^y(1-r-t)^{n-2}\,dr\,dt,
\end{equation}
and $\;1-(1-x)^n=n\int_0^x(1-t)^{n-1}\,dt$.
\end{lemma}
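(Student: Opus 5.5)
The second identity is immediate from the fundamental theorem of calculus. Since $\frac{d}{dt}\bigl[-(1-t)^n\bigr]=n(1-t)^{n-1}$, we get
\begin{equation}
n\int_0^x(1-t)^{n-1}\,dt=1-(1-x)^n .
\end{equation}
This holds for every $x\in[0,1]$ and every $n\ge1$.

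For the first identity, I plan to treat the right-hand side as an iterated integral and evaluate it in two steps, one variable at a time. Throughout, $0\le r\le x$, $0\le t\le y$ and $x+y\le1$, so $1-r-t\in[0,1]$. Hence the integrand $(1-r-t)^{n-2}$ is a continuous, nonnegative polynomial on the rectangle $[0,x]\times[0,y]$. This holds even for $n=2$, where the integrand is identically $1$. So Fubini applies and no integrability issues arise.

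First integrate in $r$. Since $\partial_r(1-r-t)^{n-1}=-(n-1)(1-r-t)^{n-2}$,
\begin{equation}
(n-1)\int_0^x(1-r-t)^{n-2}\,dr=(1-t)^{n-1}-(1-x-t)^{n-1}.
\end{equation}
Next multiply by $n$ and integrate in $t$ over $[0,y]$, using $\partial_t(1-t)^n=-n(1-t)^{n-1}$ and $\partial_t(1-x-t)^n=-n(1-x-t)^{n-1}$. This gives
\begin{equation}
n(n-1)\int_0^y\!\!\int_0^x(1-r-t)^{n-2}\,dr\,dt
=\bigl[1-(1-y)^n\bigr]-\bigl[(1-x)^n-(1-x-y)^n\bigr].
\end{equation}
Negating yields exactly $(1-x)^n+(1-y)^n-1-(1-x-y)^n$, as claimed.

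As a sanity check, the expression vanishes when $x=0$ or $y=0$. Its mixed partial $\partial_x\partial_y$ equals $-n(n-1)(1-x-y)^{n-2}$. These two facts already characterize the identity, which gives an alternative one-line proof.

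There is no real obstacle here. The only points needing care are the sign bookkeeping in the two antiderivatives and confirming that $x+y\le1$ keeps the base $1-r-t$ nonnegative, so the statement is meaningful for all integers $n\ge2$.
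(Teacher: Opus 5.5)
Your proof is correct and is essentially the paper's argument. The paper checks that both sides vanish when $x=0$ or $y=0$ and have the same mixed partial $\partial_x\partial_y=-n(n-1)(1-x-y)^{n-2}$, which is your closing check; you instead run the same two applications of the fundamental theorem of calculus forward, and assigning $r$ to $[0,x]$ rather than $[0,y]$ is harmless because the integrand is symmetric in $(r,t)$.

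The second identity is handled identically in both. One small correction: for integer $n$ the first identity holds as a polynomial identity for all real $x,y$. So $x+y\le1$ is not needed for the statement to make sense; it matters only for the sign of the integrand in the later domination step.
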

\begin{proof}
Both are exact: differentiate each side in $x$ (and $y$) and match at $x=0$
(resp.\ $y=0$). For the first, $\partial_x\partial_y$ of the left side is
$-n(n-1)(1-x-y)^{n-2}$, matching the integrand, and both sides vanish when
$x=0$ or $y=0$. The second is the fundamental theorem of calculus applied to
$\tfrac{d}{dt}(1-(1-t)^n)=n(1-t)^{n-1}$.
\end{proof}

\begin{theorem}[M\"ohle Problem 6.3 is affirmative]\label{thm:mohle}
For every integer $n\ge1$ and every $u\in\Delta$, $p_n(u)\ge0$; equivalently
$\E(\widetilde Y(n,u)^2)\le\E\widetilde Y(n,u)$ on the full source simplex.
\end{theorem}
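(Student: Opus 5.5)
The plan is to expand both moments exactly and reduce $p_n(u)\ge0$ to a sum of ordered-pair and diagonal terms. Each term is written through Lemma~\ref{lem:collision} as an integral of a nonnegative kernel over a small rectangle in the simplex. The case $n=1$ is trivial: $Y=1$, so $\widetilde Y=|u|\in[0,1]$ and $|u|^2\le|u|$. Fix $n\ge2$. By monotone convergence in the number of nonzero $u_i$, it suffices to treat $u$ with finitely many nonzero coordinates $u_1,\dots,u_m$.

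Write $\widetilde Y=A+B$ with $A=(X_0-nu_0)/n$ and $B=n^{-1}\sum_{i\ge1}\mathbf 1_{\{X_i>0\}}$. Then $\E A=0$ and $\E\widetilde Y=\E B=n^{-1}\sum_i q_i$, where $q_i=1-(1-u_i)^n=n\int_0^{u_i}(1-t)^{n-1}\,dt$ by the second identity of Lemma~\ref{lem:collision}. For the second moment I will use the exact multinomial formulas
\begin{align*}
\operatorname{Var}(X_0)&=nu_0(1-u_0),\\
\Cov\bigl(X_0,\mathbf 1_{\{X_i>0\}}\bigr)&=-nu_0u_i(1-u_i)^{n-1},\\
\Prob(X_i>0,X_j>0)&=n(n-1)\int_0^{u_i}\!\!\int_0^{u_j}(1-r-t)^{n-2}\,dr\,dt \quad (i\ne j).
\end{align*}
The last line is the first identity of Lemma~\ref{lem:collision}, read as inclusion--exclusion. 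Together with $\E[\mathbf 1_{\{X_i>0\}}^2]=q_i$, this gives
\[
n^2p_n(u)=n\sum_i q_i-\sum_i q_i-\sum_{i\ne j}\Prob(X_i>0,X_j>0)-nu_0(1-u_0)+2nu_0\sum_i u_i(1-u_i)^{n-1}.
\]

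Next I substitute $1-u_0=\sum_i u_i$ and redistribute the mass. The key rewriting is
\[
(n-1)q_i=n(n-1)\int_0^{u_i}(1-t)^{n-1}\,dt .
\]
I will split the factor $(1-t)$ in this integrand as $u_0+\sum_{j\ge1}u_j+(\text{rest})$. The rest is nonnegative, since $1-t\ge u_0+\sum_j u_j-t$ and $t\le u_i$ on the domain of integration. This produces, for each ordered pair $i\ne j$, a term $n(n-1)u_j\int_0^{u_i}(1-t)^{n-2}\cdots\,dt$ to compare against the collision double integral. Since $(1-r-t)^{n-2}\le(1-t)^{n-2}$ and $r$ ranges over an interval of length $u_j$, each pair term dominates the corresponding collision integral. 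This is the ordered-pair kernel certificate.

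The diagonal pieces, together with the $u_0$-piece, must then absorb the dust variance $nu_0\sum_iu_i$. They do so after adding back the positive cross term $2nu_0\sum_iu_i(1-u_i)^{n-1}$. Equivalently, for each $i$ I will check the one-variable inequality
\[
n(n-1)u_0\int_0^{u_i}(1-t)^{n-2}\,dt+2nu_0u_i(1-u_i)^{n-1}\ \ge\ nu_0u_i ,
\]
together with the leftover diagonal contribution. This follows from
\[
(n-1)\int_0^{x}(1-t)^{n-2}\,dt=1-(1-x)^{n-1}\quad\text{and}\quad 1-(1-x)^{n-1}+2x(1-x)^{n-1}\ge x .
\]
The latter can be checked by convexity and endpoint values on $[0,1]$.

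I expect the main obstacle to be the bookkeeping in the redistribution step. The factor $(1-t)^{n-1}$ must be split so that three requirements hold simultaneously: every ordered pair receives enough weight, the dust term receives exactly the weight $u_0$, and no negative remainder appears. If the naive split leaves a deficit on the diagonal, my first fallback is to symmetrize each pair $(i,j)$ with $(j,i)$. I would then use the symmetric bound
\[
\int_0^{x}\!\!\int_0^{y}(1-r-t)^{n-2}\,dr\,dt\le\tfrac12\Bigl[y\int_0^{x}(1-t)^{n-2}\,dt+x\int_0^{y}(1-r)^{n-2}\,dr\Bigr],
\]
which holds since the integrand is bounded by each single-variable factor.
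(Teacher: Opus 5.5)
Your proposal is correct and is essentially the paper's proof. You use the same split $1-t=u_0+\sum_{j\ne i}u_j+(u_i-t)$ (the sum must exclude $j=i$ so that the remainder $u_i-t$ is nonnegative), the same ordered-pair domination via $(1-r-t)^{n-2}\le(1-t)^{n-2}$, and the same termwise inequality; you derive M\"ohle's moment formula from multinomial covariances rather than citing it, and the symmetrization fallback is never needed. One small repair: with $q=1-x$, the function $q+q^{n-1}-2q^n$ is neither convex nor concave on $[0,1]$ for $n\ge3$, so replace ``convexity and endpoint values'' by the factorization $q\bigl(1-q^{n-2}(2q-1)\bigr)$ and the two cases $q\le\tfrac12$ and $q\ge\tfrac12$.
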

\begin{proof}
The case $n=1$ is recorded in M\"ohle's calculation \cite{mohle}:
$p_1(u)=|u|(1-|u|)\ge0$. Fix $n\ge2$ and first assume $u$ has finite support.
With $s=|u|$, M\"ohle's Lemma~6.1 and Problem~6.3 \cite{mohle} give the
ordered-pair formula
\begin{equation}
\begin{aligned}
p_n(u)
&=\frac{n-1}{n^2}A-\frac{s(1-s)}{n}+\frac{2(1-s)}{n}B\\
&\quad+\frac1{n^2}\sum_{i\ne j}
\Bigl((1-u_i)^n+(1-u_j)^n-1-(1-u_i-u_j)^n\Bigr),
\end{aligned}
\end{equation}
where $A=\sum_i(1-(1-u_i)^n)$, $B=\sum_i u_i(1-u_i)^{n-1}$, and the pair sum is
ordered. Applying the two identities of Lemma~\ref{lem:collision} and writing
\begin{equation}
\begin{aligned}
S&=\sum_i\int_0^{u_i}(1-t)^{n-1}\,dt,\\
T&=\sum_{i\ne j}\int_0^{u_i}\!\!\int_0^{u_j}(1-t-r)^{n-2}\,dr\,dt,
\end{aligned}
\end{equation}
the formula collapses to
\begin{equation}\label{eq:mohle-collapsed}
p_n(u)=\frac{n-1}{n}\,(S-T)+\frac{1-s}{n}\,(2B-s).
\end{equation}

\emph{Ordered-pair domination.} For fixed $i$ and $0\le t\le u_i$,
\begin{equation}
\sum_{j\ne i}\int_0^{u_j}(1-t-r)^{n-2}\,dr\le (s-u_i)(1-t)^{n-2},
\end{equation}
since \(n\) is an integer and \(0\le t+r\le u_i+u_j\le s\le1\) keeps the
integrand nonnegative, while $r\ge0$ gives
$1-t-r\le1-t$. Hence
\begin{equation}
\begin{aligned}
S-T
&\ge\sum_i\int_0^{u_i}
\bigl((1-t)^{n-1}-(s-u_i)(1-t)^{n-2}\bigr)\,dt\\
&=\sum_i\int_0^{u_i}
(1-t)^{n-2}\bigl((1-s)+(u_i-t)\bigr)\,dt\\
&\ge\frac{1-s}{n-1}\sum_i\bigl(1-(1-u_i)^{n-1}\bigr),
\end{aligned}
\end{equation}
using $\int_0^{u_i}(1-t)^{n-2}\,dt=\tfrac1{n-1}(1-(1-u_i)^{n-1})$ and discarding
the nonnegative $(u_i-t)$ term. Substituting into \eqref{eq:mohle-collapsed},
\begin{equation}
p_n(u)\ge\frac{1-s}{n}\sum_i
\Bigl(1-(1-u_i)^{n-1}+2u_i(1-u_i)^{n-1}-u_i\Bigr).
\end{equation}

\emph{Termwise positivity.} Put $q=1-u_i\in[0,1]$. The bracket equals
\begin{equation}
q+q^{n-1}-2q^{n}=q\bigl(1-q^{n-2}(2q-1)\bigr).
\end{equation}
If $q\le\tfrac12$ the parenthesized factor is $\ge1$; if $q\ge\tfrac12$ then
$0\le q^{n-2}(2q-1)\le1$. Either way each term is $\ge0$, and $1-s\ge0$, so
$p_n(u)\ge0$ for finitely supported $u$.

\emph{Countable support.} Let \(u^{(m)}=(u_1,\ldots,u_m,0,\ldots)\) and
\(s_m=|u^{(m)}|\). Then \(s_m\uparrow s\). The one-body terms in the displayed
formula converge by dominated convergence:
\begin{equation}
0\le 1-(1-u_i)^n\le n u_i,\qquad
0\le u_i(1-u_i)^{n-1}\le u_i,\qquad \sum_i u_i=s<\infty .
\end{equation}
For the ordered-pair term set
\begin{equation}
D_n(x,y)=(1-x)^n+(1-y)^n-1-(1-x-y)^n,\qquad x,y\ge0,\ x+y\le1 .
\end{equation}
Lemma~\ref{lem:collision} gives
\begin{equation}
D_n(x,y)=-n(n-1)\int_0^x\!\!\int_0^y(1-r-t)^{n-2}\,dr\,dt,
\end{equation}
hence
\begin{equation}
|D_n(x,y)|\le n(n-1)xy .
\end{equation}
Therefore
\begin{equation}
\sum_{i\ne j}|D_n(u_i,u_j)|
 \le n(n-1)\sum_{i\ne j}u_i u_j
 \le n(n-1)s^2<\infty .
\end{equation}
Since \(D_n(u_i^{(m)},u_j^{(m)})\to D_n(u_i,u_j)\) for every ordered pair
\((i,j)\), dominated convergence for counting measure on
\(\{(i,j):i\ne j\}\) yields convergence of the ordered-pair sums. Thus the
right-hand side of M\"ohle's formula satisfies \(p_n(u^{(m)})\to p_n(u)\), and
the finite-support inequality \(p_n(u^{(m)})\ge0\) passes to the limit.
\end{proof}

\begin{remark}
Theorem~\ref{thm:mohle} proves the nonnegativity assertion in
M\"ohle's Problem~6.3 \cite{mohle}. The reusable object is a
\emph{finite-simplex ordered-pair kernel certificate}:
negative pair-collision terms of the form $(1-x)^n+(1-y)^n-1-(1-x-y)^n$ are
dominated by one-body integrals precisely because the simplex constraint
$s\le1$ leaves a residual factor $1-s$.
\end{remark}

\subsection{The remaining Pearse--Bondell complete-monotonicity conjecture}

A standard Archimedean-generator criterion identifies the all-dimensional case
with complete monotonicity of the inverse
\cite{kimberling,nelsen,mcneil-neslehova}.

\begin{proposition}[All-dimensional Archimedean-generator criterion]\label{prop:arch-criterion}
Let \(\phi\colon(0,1]\to[0,\infty)\) be continuous, strictly decreasing, and
normalized by \(\phi(1)=0\), and suppose that its strict inverse
\(\phi^{-1}\colon(0,\infty)\to(0,1)\) is completely monotone. Then \(\phi\) is
an Archimedean generator in every dimension.
\end{proposition}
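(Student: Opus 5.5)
The plan is to reduce the statement to the classical Kimberling theorem via Bernstein--Widder (Theorem~\ref{thm:bw}) and a mixture construction, in the Marshall--Olkin frailty style. Write \(\psi=\phi^{-1}\), extended by \(\psi(0)=1\) by continuity. Since \(\phi\) is continuous and strictly decreasing with \(\phi(1)=0\), \(\psi\) is continuous and strictly decreasing from \(1\) to \(\psi(\infty)=\lim_{u\downarrow0}\phi^{-1}\). We must show that \(C(u_1,\dots,u_d)=\psi(\phi(u_1)+\cdots+\phi(u_d))\) is a \(d\)-dimensional distribution function with uniform margins for every \(d\).

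First, apply Theorem~\ref{thm:bw} to \(\psi\in\CM\). The representing measure \(\mu\) is finite with total mass \(\psi(0^+)=1\), so it is a probability measure. Let \(V\sim\mu\). There is a possible atom \(\mu(\{0\})=\psi(\infty)\). If \(\phi(0^+)=\infty\), then \(\psi(\infty)=0\) and there is no atom. Otherwise we are in the non-strict case \(\phi(0^+)<\infty\), and we use the pseudo-inverse convention \(\psi=0\) beyond \(\phi(0)\). I would treat this case by noting that complete monotonicity on all of \((0,\infty)\) together with \(\psi>0\) forces \(\psi\) to be strictly positive everywhere. The hypothesis that the strict inverse maps \((0,\infty)\) onto \((0,1)\) then forces \(\phi(0^+)=\infty\). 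Consequently \(\psi(\infty)=0\), and \(V>0\) almost surely.

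Second, construct the copula explicitly. Take \(E_1,\dots,E_d\) i.i.d.\ unit exponentials independent of \(V\), and set \(U_i=\psi(E_i/V)\). Conditioning on \(V\) gives
\[
\Prob(U_i\le u_i\ \forall i)=\Prob(E_i\ge V\phi(u_i)\ \forall i)=\E\, e^{-V\sum_i\phi(u_i)}=\psi\Bigl(\sum_i\phi(u_i)\Bigr),
\]
using that \(\psi\) is strictly decreasing, so \(\{\psi(x)\le u\}=\{x\ge\phi(u)\}\). Setting all but one \(u_j=1\), so that \(\phi(1)=0\), yields uniform margins. Hence \(C\) is a genuine copula in every dimension \(d\). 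This is exactly the exponential-race mechanism of Theorem~\ref{thm:mecke}(i), applied coordinatewise with a shared frailty.

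The main obstacle is the boundary bookkeeping. One must justify \(\psi(0^+)=1\) so that \(\mu\) is a probability measure, and rule out an atom at zero, or else handle it with the extended convention \(U_i=\psi(\infty)\) on \(\{V=0\}\). One must also check the events at the endpoints \(u_i\in\{0,1\}\). I expect these checks to be routine given continuity and strict monotonicity.
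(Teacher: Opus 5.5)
Your proposal is correct and follows essentially the paper's route: Bernstein--Widder gives a probability mixing measure $M$ for $\phi^{-1}$, the function $C$ is the mixture $\int\prod_i e^{-s\phi(u_i)}\,M(ds)$, and uniform margins follow from $\phi^{-1}(\phi(u))=u$. Your frailty variables $U_i=\psi(E_i/V)$ just make explicit the paper's remark that $u\mapsto e^{-s\phi(u)}$ is a distribution function. One simplification: your detour through the non-strict case is unnecessary, since the hypothesis that $\phi^{-1}$ is defined on all of $(0,\infty)$ already forces $\phi(0^+)=\infty$, and hence $M$ has no atom at $0$.
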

\begin{proof}
This is the standard Kimberling criterion in strict-generator form
\cite{kimberling,nelsen,mcneil-neslehova}. By Theorem~\ref{thm:bw},
\(\phi^{-1}\) is the Laplace transform of a probability measure \(M\) on
\([0,\infty)\). For each \(d\ge2\),
\begin{equation}
\begin{aligned}
C(u_1,\ldots,u_d)
&=\phi^{-1}\!\left(\sum_{i=1}^d\phi(u_i)\right)\\
&=\int_{[0,\infty)}\prod_{i=1}^d e^{-s\phi(u_i)}\,M(ds).
\end{aligned}
\end{equation}
For fixed \(s\), the map \(u\mapsto e^{-s\phi(u)}\) is a distribution function
on \([0,1]\). The displayed mixture has uniform one-dimensional margins because
\begin{equation}
\int_{[0,\infty)} e^{-s\phi(u)}\,M(ds)=\phi^{-1}(\phi(u))=u.
\end{equation}
Hence \(C\) is a \(d\)-copula for every \(d\).
\end{proof}

Pearse and Bondell \cite{pearse-bondell} introduce the power-divergence
generator $\phi_\lambda$ and prove complete monotonicity of
$\phi_\lambda^{-1}$ in several cases. Their Section~3.8 leaves the strict
negative range \(\lambda\le-1\) as a complete-monotonicity conjecture. The
divergence family belongs to the Cressie--Read power-divergence family and the
broader \(\phi\)-divergence tradition
\cite{csiszar,morimoto,ali-silvey,cressie-read,read-cressie,amari}.
We use their decreasing-generator convention
\(\phi_\lambda\colon(0,1]\to[0,\infty)\), normalized by
\(\phi_\lambda(1)=0\), and write its strict inverse as
\(\phi_\lambda^{-1}\).
For the negative range used below, write \(\lambda=-\gamma\) with
\(\gamma\ge1\). The generator is
\begin{equation}\label{eq:pd-generator}
\phi_{-\gamma}(x)=
\begin{cases}
\dfrac{x^{1-\gamma}+(\gamma-1)x-\gamma}{\gamma(\gamma-1)},
& \gamma>1,\\[0.9em]
x-1-\log x, & \gamma=1,
\end{cases}
\qquad 0<x\le1.
\end{equation}
It satisfies \(\phi_{-\gamma}(1)=0\) and
\(\phi_{-\gamma}(0^+)=+\infty\).

\begin{theorem}[Power-divergence inverse is completely monotone]\label{thm:pd}
For every $\lambda\le-1$ the strict inverse $\phi_\lambda^{-1}$ of the
power-divergence generator in \eqref{eq:pd-generator} is strictly completely
monotone on $(0,\infty)$, extends continuously to $0$ with value \(1\), and
satisfies \(\lim_{t\to\infty}\phi_\lambda^{-1}(t)=0\). Consequently, by
Proposition~\ref{prop:arch-criterion}, \(\phi_\lambda\) is a valid
Archimedean generator in every dimension for all \(\lambda\le-1\).
\end{theorem}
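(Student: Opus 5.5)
The plan is to obtain the inverse from an autonomous ODE and apply Lemma~\ref{lem:invode} after a logarithmic change of variables. Write \(y=\phi_{-\gamma}^{-1}\colon(0,\infty)\to(0,1)\). Differentiating \eqref{eq:pd-generator} gives, for \(\gamma>1\),
\(\phi_{-\gamma}'(x)=(1-x^{-\gamma})/\gamma\). For \(\gamma=1\) it gives \(\phi_{-1}'(x)=1-x^{-1}\), which is the same formula. Since this derivative is strictly negative and smooth on \((0,1)\), \(\phi_{-\gamma}\) is a smooth strictly decreasing bijection \((0,1]\to[0,\infty)\). The inverse function theorem then gives \(y\in C^\infty(0,\infty)\), with
\begin{equation}
y'(t)=\frac{1}{\phi_{-\gamma}'(y)}=-\frac{\gamma\,y^{\gamma}}{1-y^{\gamma}} .
\end{equation}
The boundary facts \(\phi_{-\gamma}(1)=0\) and \(\phi_{-\gamma}(0^+)=+\infty\) immediately give \(y(0^+)=1\) and \(y(\infty)=0\). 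This settles the continuity and limit claims of Theorem~\ref{thm:pd}.

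One could try to treat \(y'=-h(y)\) directly with \(h(y)=\gamma\sum_{k\ge1}y^{k\gamma}\). However, for non-integer \(\gamma\) the powers \(y^{k\gamma}\) are not absolutely monotone. So I will instead pass to \(z=-\log y\), which is a positive, increasing, \(C^\infty\) map \((0,\infty)\to(0,\infty)\) with \(z(0^+)=0\). It satisfies
\begin{equation}
z'=-\frac{y'}{y}=\frac{\gamma\,y^{\gamma-1}}{1-y^{\gamma}}
=\frac{\gamma e^{-(\gamma-1)z}}{1-e^{-\gamma z}}
=\gamma\sum_{k\ge0}e^{-(\gamma-1+k\gamma)z}=:G(z).
\end{equation}
Because \(\gamma\ge1\), every exponent \(\gamma-1+k\gamma\) is nonnegative, and those with \(k\ge1\) are strictly positive. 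Hence \(G\) is a Laplace transform of a positive discrete measure with mass off \(0\). By Lemma~\ref{lem:poskernel}, \(G\) is strictly completely monotone on \((0,\infty)\); termwise differentiation is justified by local uniform convergence of the series for \(z>0\). Since \(z'=G(z)>0\), Lemma~\ref{lem:invode} yields \(z\in\BF\) with strict sign inequalities.

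It remains to show that \(y=e^{-z}\) is strictly CM. I will use the standard fact that \(e^{-g}\in\CM\) for \(g\in\BF\), and prove it directly by the same Fa\`a di Bruno sign count as in Lemma~\ref{lem:invode}. The \(n\)-th derivative of \(e^{-z}\) is a nonnegative combination of terms \((-1)^k e^{-z}\prod_i z^{(j_i)}\) with \(\sum_i j_i=n\) and \(k\) factors. Each such term has sign \((-1)^k(-1)^{n-k}=(-1)^n\). The term \(k=n\), namely \((-1)^n e^{-z}(z')^n\), is strictly signed because \(z'>0\), which gives strictness. Finally, Proposition~\ref{prop:arch-criterion} applies, since \(\phi_{-\gamma}\) is continuous, strictly decreasing, normalized, and has a CM strict inverse.

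I expect the main obstacle to be the choice of variable in the second step. In the naive variable the ODE right-hand side fails to be CM, and the key observation is that \(\gamma\ge1\) is exactly what makes every exponent in \(G\) nonnegative. Indeed, for \(\gamma<1\) the \(k=0\) term \(e^{(1-\gamma)z}\) grows, which explains why the range stops at \(\lambda\le-1\). The remaining points are routine: \(C^\infty\) regularity, the fact that \(z\) maps into the interval \((0,\infty)\) on which \(G\) is CM (as Lemma~\ref{lem:invode} requires), and the strictness bookkeeping.
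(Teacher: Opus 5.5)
Your argument is correct, but it is organized differently from the paper's. The paper never leaves the variable \(u\). It writes \((-1)^n u^{(n)}(t)=P_n(u(t))\) with \(P_0(x)=x\), \(P_{n+1}=L_\gamma P_n\), and \(L_\gamma=H_\gamma(x)\,\frac{d}{dx}\). It then expands each \(P_n\) as an explicit multi-index generalized power series \(\gamma^n\sum(\prod_j\beta_j)x^{\beta_n}\) and reads off positivity from \(\beta_j\ge1+j(\gamma-1)\). This requires a polynomial-times-geometric estimate to justify applying \(L_\gamma\) termwise.

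Under your substitution \(x=e^{-z}\) one has \(L_\gamma=-G(z)\,\frac{d}{dz}\). The paper's exponent bookkeeping is therefore the statement that \(-G\,\frac{d}{dz}\) maps positive combinations of \(e^{-\beta z}\), \(\beta>0\), to such combinations. So it rests on the same fact \(G\in\CM\) that you isolate, with \(\gamma\ge1\) entering as nonnegativity of the exponents \(\gamma-1+k\gamma\). You package this instead as Lemma~\ref{lem:invode} applied to \(z\), followed by the \(\CM\circ\BF\) closure for \(e^{-z}\).

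Your route has several advantages:
\begin{itemize}
\item It is shorter, and it needs convergence only of the single series defining \(G\).
\item It actually uses Lemma~\ref{lem:invode}, which the paper's concluding remarks credit for this range even though the paper's proof does not invoke it.
\item It yields a stronger conclusion. Since \(z=-\log\phi_\lambda^{-1}\in\BF\) with \(z(0^+)=0\), every power \((\phi_\lambda^{-1})^s\), \(s>0\), is completely monotone. Hence the mixing law \(M\) in Proposition~\ref{prop:arch-criterion} is infinitely divisible.
\end{itemize}
The paper's route, in exchange, is self-contained without a composition lemma and gives an explicit positive series for each derivative \((-1)^n u^{(n)}\) as a function of \(u\).
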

\begin{proof}
Write $\lambda=-\gamma$ with $\gamma\ge1$. From \eqref{eq:pd-generator}, in both
cases
\begin{equation}
\phi_{-\gamma}'(x)=\frac{1-x^{-\gamma}}{\gamma}=-\frac{1-x^\gamma}{\gamma x^\gamma}<0
\quad(0<x<1),
\end{equation}
with $\phi_{-\gamma}(1)=0$ and $\phi_{-\gamma}(0^+)=+\infty$. Thus
$u(t)=\phi_{-\gamma}^{-1}(t)$ is a strict $C^\infty$ bijection
$(0,\infty)\to(0,1)$, and by the inverse-function theorem
\begin{equation}
u'(t)=-\frac{\gamma\,u(t)^\gamma}{1-u(t)^\gamma}.
\end{equation}
Set
\begin{equation}
H_\gamma(x)=\frac{\gamma x^\gamma}{1-x^\gamma}=\gamma\sum_{m\ge1}x^{m\gamma}
\quad(0<x<1),\qquad L_\gamma=H_\gamma(x)\frac{d}{dx},
\end{equation}
so that $\frac{d}{dt}F(u(t))=-(L_\gamma F)(u(t))$ for smooth $F$. With
$P_0(x)=x$ and $P_{n+1}=L_\gamma P_n$, induction gives
\begin{equation}
(-1)^n u^{(n)}(t)=P_n(u(t)),\qquad n\ge0.
\end{equation}

It remains to prove \(P_n>0\) on \((0,1)\). For a monomial \(x^\beta\) with
\(\beta>0\),
\begin{equation}
L_\gamma x^\beta
=\gamma\beta\sum_{m\ge1}x^{\beta-1+m\gamma}.
\end{equation}
Iterating this identity gives, for \(n\ge1\),
\begin{equation}
\begin{aligned}
P_n(x)
&=\gamma^n\sum_{m_1,\dots,m_n\ge1}
\Bigl(\prod_{j=0}^{n-1}\beta_j\Bigr)x^{\beta_n},\\
\beta_0&=1,\qquad
\beta_j=1-j+\gamma(m_1+\cdots+m_j).
\end{aligned}
\end{equation}
For \(1\le j\le n\),
\begin{equation}
\beta_j\ge 1-j+\gamma j=1+j(\gamma-1).
\end{equation}
When \(\gamma=1\), this lower bound equals \(1\); when \(\gamma>1\), it is
strictly larger than \(1\). Thus every displayed coefficient and exponent is
positive. The series, and the one derivative needed to apply \(L_\gamma\)
termwise at the next step, are locally uniformly convergent on \((0,1)\): if
\(0<x\le r<1\) and
\(M=m_1+\cdots+m_n\), then
\begin{equation}
\prod_{j=0}^{n-1}\beta_j \le C_{n,\gamma}(1+M)^n,\qquad
x^{\beta_n}\le r^{1-n+\gamma M},
\end{equation}
and the number of \(n\)-tuples with sum \(M\) is \(O(M^{n-1})\). Hence the
absolute series is bounded by a constant times
\begin{equation}
\sum_{M\ge n}(1+M)^{2n-1} r^{\gamma M},
\end{equation}
which converges. Differentiating one term multiplies it by at most a further
constant times \((1+M)r^{-1}\) on \(0<x\le r<1\), so the differentiated series
has the same polynomial-times-geometric convergence form. Therefore \(P_n(x)>0\)
for every \(x\in(0,1)\). Since
\(P_0(x)=x>0\), the identity
\begin{equation}
(-1)^n u^{(n)}(t)=P_n(u(t))
\end{equation}
gives strict complete monotonicity of \(u=\phi_\lambda^{-1}\) on
\((0,\infty)\). Finally, \(\phi_{-\gamma}(1)=0\) and
\(\phi_{-\gamma}(0^+)=+\infty\), so \(u(t)\uparrow1\) as \(t\downarrow0\) and
\(u(t)\downarrow0\) as \(t\to\infty\).
\end{proof}

\begin{remark}[Source problem]
Theorem~\ref{thm:pd} proves complete monotonicity on the remaining strict
negative range \(\lambda\le-1\) identified in Pearse--Bondell's discussion of
power-divergence copula generators \cite[Section~3.8]{pearse-bondell}. Combined
with the special cases verified in the source paper, it gives the full
conclusion for \(\lambda\le-1\). Under the
Kimberling--Nelsen criterion \cite{kimberling,nelsen}, complete monotonicity of
\(\phi_\lambda^{-1}\) gives Archimedean copulas in every dimension; the
finite-dimensional \(d\)-monotone formulation is due to
McNeil--Ne\v{s}lehov\'a \cite{mcneil-neslehova}.

The proof is a positive generalized power-series flow for the decreasing
inverse equation \(u'=-H_\gamma(u)\). The key point is the explicit positivity
of all intermediate exponents \(\beta_j\), which proves the complete
monotonicity conjectured by Pearse and Bondell for the remaining range
\(\lambda\le-1\).
\end{remark}

\section{Bernstein structure}\label{sec:bernstein}

\subsection{A cubic inverse-polynomial Bernstein branch}

Li's generalized dual relativistic-diffusion framework leads to inverse-symbol
Bernstein checks for subordination constructions \cite{li-relativistic}. The
cubic inverse-polynomial model below gives a clean test case: the inverse branch
is a Bernstein function exactly in the discriminant regime.

\begin{theorem}[Cubic discriminant criterion]\label{thm:li}
Let $a,b\ge0$ and let $\varphi$ be the positive inverse of
\begin{equation}
\lambda=\varphi^3+a\varphi^2+b\varphi.
\end{equation}
Then $\varphi\in\BF$ on $(0,\infty)$ if and only if $a^2\ge3b$.
\end{theorem}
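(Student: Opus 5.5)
The approach is to reduce the statement to the inverse-ODE framework of Lemma~\ref{lem:invode} for sufficiency, and to use an analytic-continuation obstruction for necessity. Differentiating $\lambda=p(\varphi)$ with $p(y)=y^3+ay^2+by$ gives
\begin{equation}
\varphi'(\lambda)=g(\varphi(\lambda)),\qquad g(y)=\frac{1}{p'(y)}=\frac{1}{3y^2+2ay+b},
\end{equation}
and $\varphi$ is a positive $C^\infty$ increasing bijection of $(0,\infty)$ onto itself, because $p'>0$ on $(0,\infty)$ when $a,b\ge0$. The whole question then becomes whether the quadratic $3y^2+2ay+b$ has real roots, i.e.\ whether $4a^2-12b\ge0$.

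\emph{Sufficiency.} Assume $a^2\ge3b$. Then $3y^2+2ay+b=3(y+r_1)(y+r_2)$ with $r_{1,2}=\bigl(a\pm\sqrt{a^2-3b}\bigr)/3\ge0$ real and nonnegative. Hence $g(y)=\tfrac13(y+r_1)^{-1}(y+r_2)^{-1}$ is a product of completely monotone functions on $(0,\infty)$, so $g\in\CM$ there (a product of Laplace transforms of positive measures is again one). Lemma~\ref{lem:invode}, applied with $I=(0,\infty)$ and $y=\varphi$, gives $\varphi'\in\CM$, so $\varphi\in\BF$. The degenerate case $r_2=0$, which occurs when $b=0$, is harmless because $1/y\in\CM$ on $(0,\infty)$.

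\emph{Necessity.} Assume $a^2<3b$, so in particular $b>0$, and suppose for contradiction that $\varphi'\in\CM$. By Theorem~\ref{thm:bw}, $\varphi'(\lambda)=\int e^{-\lambda t}\,d\mu(t)$ with $\mu\ge0$. Since $p'>0$ on all of $\R$, $p$ is a real-analytic increasing bijection $\R\to\R$. Its inverse therefore continues $\varphi$ real-analytically to all of $\R$, and $\varphi'=g\circ\varphi$ is real-analytic on $\R$. Landau's (Pringsheim-type) theorem for Laplace transforms of positive measures says the abscissa of convergence $\sigma_c$ is a singular point on the real axis. Since there are no real singularities, $\sigma_c=-\infty$, so $\varphi'$, and hence $\varphi$, extends to an entire function. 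The identity $p(\varphi(\lambda))=\lambda$ persists by the identity theorem. Then $|\varphi(\lambda)|^3\lesssim|\lambda|+1$ for large $|\lambda|$, so by Cauchy estimates $\varphi$ is a polynomial of degree at most $0$. This is absurd, because $p\circ\varphi$ has degree $3\deg\varphi\neq1$. (Equivalently, the continuation must hit the complex branch points $p(y_c)$ with $p'(y_c)=0$, where $\varphi'=1/p'(\varphi)$ blows up.) Hence $\varphi\notin\BF$.

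The main obstacle I expect is making the necessity step airtight. The two delicate points are invoking Landau's theorem correctly, so that the Laplace integral genuinely equals the real-analytic continuation on the whole half-plane $\operatorname{Re}\lambda>\sigma_c$, and confirming that real-analyticity on $\R$ forces $\sigma_c=-\infty$. If that route proves awkward, my fallback is a direct argument: show that $g$ itself fails CM when $a^2<3b$, since $1/(3y^2+2ay+b)$ is the Laplace transform of a damped sine, which is a signed measure. I would then transfer this failure back to $\varphi'$ through the local behaviour of $\varphi$ near $\lambda=0$, where $\varphi(\lambda)=\lambda/b+O(\lambda^2)$. That transfer is more computational and less clean than the continuation argument.
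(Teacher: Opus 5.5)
Your proof is correct. The sufficiency half is the paper's argument: factor $P'(y)=3(y+r_-)(y+r_+)$ with $r_\pm\ge0$, then apply Lemma~\ref{lem:invode}.

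Necessity follows the same overall plan as the paper, but rests on a different singularity theorem. The shared skeleton has three steps. First, $P'>0$ on $\R$ makes the real inverse analytic along the whole real axis. Second, a positivity theorem forces a real singularity at the edge of convergence. Third, a cube-root growth bound plus Cauchy estimates rules out an entire extension.

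The paper works with the Taylor series of $\varphi$ at $\lambda=0$, which exists because $P'(0)=b>0$. It passes the CM inequalities to the limit $\lambda\downarrow0$, so the coefficients alternate in sign and $-\varphi(-z)$ has nonnegative coefficients. Pringsheim's theorem then places a singularity at $\lambda=-R$; the growth argument is used only to show $R<\infty$.

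You instead use the Bernstein--Widder measure of $\varphi'$ directly and apply Landau's theorem for Laplace--Stieltjes transforms of positive measures at the abscissa $\sigma_c\le0$. The growth argument then eliminates the remaining alternative $\sigma_c=-\infty$.

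What your route gains: it skips the Taylor expansion and the sign transfer at the endpoint $0$, and it stays closer to the measure-theoretic meaning of complete monotonicity. What it costs: Landau's theorem is a somewhat heavier citation than Pringsheim's. You also need the identification step you flagged, namely that the Laplace integral agrees with the holomorphic extension of $g\circ P^{-1}$ on a half-disk at $\sigma_c$. This does hold by the identity theorem, because both functions are real-analytic on $(\sigma_c,\infty)$ and coincide on $(0,\infty)$.

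Your fallback via the damped-sine kernel of $g$ is not needed.
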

\begin{proof}
\emph{Sufficiency.} Assume $\Delta:=a^2-3b\ge0$. Since $b\ge0$ we have
$\sqrt\Delta\le a$, so with $r_\pm=(a\pm\sqrt\Delta)/3$ one has $0\le r_-\le r_+$
and
\begin{equation}
P'(x)=3x^2+2ax+b=3(x+r_-)(x+r_+).
\end{equation}
Thus $g(x)=1/P'(x)$ is a product of the completely monotone factors
$(x+r_\pm)^{-1}$ (with the repeated-root and $r=0$ cases handled by the same
formula or a limit, e.g.\ $1/(3x^2)\in\CM$), hence $g\in\CM$. The inverse obeys
$\varphi'(\lambda)=g(\varphi(\lambda))$, and Lemma~\ref{lem:invode} gives
$\varphi'\in\CM$, i.e.\ $\varphi\in\BF$.

\emph{Necessity.} Assume $a^2<3b$; then $b>0$ and
$P'(x)=3x^2+2ax+b>0$ for all real $x$. Hence \(P\colon\R\to\R\) is a strictly
increasing bijection. For every real \(\lambda\), the real inverse
\(x=P^{-1}(\lambda)\) satisfies \(P'(x)>0\), so the complex implicit-function
theorem gives a holomorphic inverse branch in a neighborhood of \(\lambda\).
These local branches agree on overlaps by uniqueness, and therefore the branch
continued from \(0\) is holomorphic along every compact interval in the
negative real axis. In particular, it has no singularity at any negative real
point. Because \(b>0\), \(\varphi\) is analytic at \(0\):
\(\varphi(\lambda)=\sum_{n\ge1}c_n\lambda^n\), with finite radius \(R\). Indeed,
if \(R=\infty\), then \(P(\varphi)=\lambda\) and \(|P(w)|\ge|w|^3/2\) for large
\(|w|\) would force \(|\varphi(\lambda)|\le C(1+|\lambda|)^{1/3}\). By Cauchy's
estimates such an entire function has zero derivative, contradicting
\(P'(\varphi)\varphi'=1\).

Suppose $\varphi\in\BF$. Then $\varphi'\in\CM$ and, $\varphi$ being analytic at
$0$, $(-1)^{n-1}c_n\ge0$ for $n\ge1$. Hence
$F(z):=-\varphi(-z)=\sum_{n\ge1}(-1)^{n-1}c_n z^n$ has nonnegative coefficients
and finite radius $R$. The case \(R=\infty\), including the possibility that
\(F\) is a polynomial, has already been excluded by the preceding entire-growth
argument. Hence Pringsheim's theorem \cite{pringsheim} implies that \(F\) has a
singularity at \(z=R\),
i.e.\ $\varphi$ is singular at $\lambda=-R<0$. This contradicts the negative-axis
analyticity above. Therefore $\varphi\notin\BF$ when $a^2<3b$.
\end{proof}

\begin{remark}[Source problem]
Theorem~\ref{thm:li} gives the Bernstein-function regime for the positive
inverse branch of the cubic inverse-polynomial motivated by Li's generalized
dual relativistic-diffusion subordination setting \cite{li-relativistic}. The
criterion is the discriminant inequality \(a^2\ge3b\).
\end{remark}

\subsection{Special Bernstein functions have decreasing renewal sequences}

Bendikov and Cygan \cite{bendikov-cygan} ask whether the discrete renewal
sequence attached to a special Bernstein function is monotone. Under the source
normalization $\psi(0)=0$, $\psi(1)=1$, the renewal sequence $C(n)$ is defined by
$C(0)=1$ and $C(n)=\sum_{j=1}^n c(\psi,j)C(n-j)$, where
$K(z)=\sum_{n\ge1}c(\psi,n)z^n=1-\psi(1-z)$.

\begin{theorem}[Renewal monotonicity]\label{thm:bendikov}
For every source-normalized $\psi\in\SBF$, the renewal sequence is
nonincreasing: $C(k+1)\le C(k)$ for all $k\ge0$.
\end{theorem}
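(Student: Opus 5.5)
The plan is to compute the generating function of the renewal sequence and express it through the potential of \(\psi\), as the abstract suggests (``representing the potential kernel as a Gamma average of a nonincreasing density''). From the renewal recursion, \(\sum_{n\ge0}C(n)z^n=1/(1-K(z))=1/\psi(1-z)\) for \(0\le z<1\). By Proposition~\ref{prop:standard-reps}(ii), with \(\psi(0)=0\) and \(\psi\in\SBF\), we have \(1/\psi(\lambda)=b+\int_0^\infty e^{-\lambda t}u(t)\,dt\), where \(b\ge0\) and \(u\) is nonincreasing. Setting \(\lambda=1-z\) and expanding \(e^{-(1-z)t}=e^{-t}\sum_n t^nz^n/n!\), Tonelli gives, for \(n\ge1\),
\begin{equation}
C(n)=\int_0^\infty \frac{t^n e^{-t}}{n!}\,u(t)\,dt=\E\,u(G_{n+1}),\qquad G_{n+1}\sim\mathrm{Gamma}(n+1,1),
\end{equation}
and \(C(0)=b+\E\,u(G_1)\).

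Monotonicity then follows from the stochastic ordering of the Gamma family. \(G_{k+2}\) has the law of \(G_{k+1}+E\), with \(E\) an independent unit exponential, so \(G_{k+2}\ge G_{k+1}\) under this coupling. Since \(u\) is nonincreasing, \(u(G_{k+2})\le u(G_{k+1})\) pointwise, and taking expectations gives \(C(k+1)\le C(k)\) for \(k\ge1\). For \(k=0\) the extra term \(b\ge0\) only helps: \(C(1)=\E u(G_2)\le\E u(G_1)\le C(0)\).

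An alternative in the paper's covariance language is to write \(C(k)-C(k+1)=\E[u(G)(1-G/(k+1))]\) with \(G=G_{k+1}\). Here \(\E[1-G/(k+1)]=0\), so this equals \(-\Cov(u(G),G)/(k+1)\), which is \(\ge0\) by Lemma~\ref{lem:cov}.

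The main obstacle is integrability and the exchange of sum and integral. We need \(\E u(G_{n+1})<\infty\), and \(u\) may be singular at \(0\). Local integrability of \(u\) handles the origin, because \(t^ne^{-t}\) is bounded near \(0\). Near infinity, \(u\) is bounded since it is nonincreasing and nonnegative. Finiteness of \(\int e^{-\lambda t}u(t)\,dt\) for \(\lambda>0\) also justifies Tonelli for \(z<1\), because all terms are nonnegative. Coefficient matching is then legitimate by uniqueness of power series on \([0,1)\).

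We should also check the source normalization: \(\psi(1)=1\) ensures \(K(1)=1\), and \(c(\psi,n)\ge0\) so that \(K\) is a probability generating function. These are consistent with the identification above but are not needed for the inequality itself.
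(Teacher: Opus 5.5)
Your proposal is correct and follows essentially the paper's route: the same identity $\sum_n C(n)z^n=1/\psi(1-z)$, the same potential-density representation giving $C(k)$ for $k\ge1$ as a $\Gamma(k+1,1)$-average of the nonincreasing $u$, and the extra constant $b$ appearing only in $C(0)$. Your main monotonicity step uses the coupling $G_{k+2}=G_{k+1}+E$ (first-order stochastic dominance) in place of the paper's covariance identity $C(k+1)-C(k)=\tfrac{1}{k+1}\Cov(T,u(T))$, which you also reproduce; the coupling treats $k=0$ uniformly, without the separate $\Cov(T,u(T))-b$ computation.
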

\begin{proof}
The renewal generating function is
\begin{equation}
F(z)=\sum_{n\ge0}C(n)z^n=(1-K(z))^{-1}=\frac1{\psi(1-z)},\qquad |z|<1.
\end{equation}
For \(\psi\in\SBF\), the potential measure of the corresponding subordinator
has Laplace transform \(1/\psi\), and Proposition~\ref{prop:standard-reps}
gives
\begin{equation}
\frac1{\psi(\lambda)}
=b+\int_0^\infty e^{-\lambda t}u(t)\,dt,\qquad b\ge0,
\end{equation}
where \(u\) has a nonincreasing version and
\(\int_0^\infty e^{-\lambda t}u(t)\,dt<\infty\) for every \(\lambda>0\).
Substituting \(\lambda=1-z\), and using Tonelli for \(0\le z<1\) followed by
analytic identification of the coefficients in \(|z|<1\), gives
\begin{equation}
F(z)=b+\int_0^\infty e^{-t}e^{zt}u(t)\,dt
=b+\sum_{k\ge0}\frac{z^k}{k!}\int_0^\infty t^k e^{-t}u(t)\,dt,
\end{equation}
Indeed, for \(|z|\le r<1\),
\(e^{-t}|e^{zt}|u(t)\le e^{-(1-r)t}u(t)\), and the right-hand side is
integrable by the potential representation at \(\lambda=1-r\). This justifies
termwise expansion and coefficient extraction on compact subdisks of
\(|z|<1\).
so
\begin{equation}
C(0)=b+\int_0^\infty e^{-t}u(t)\,dt=1,\qquad
C(k)=\frac1{k!}\int_0^\infty t^k e^{-t}u(t)\,dt,\quad k\ge1.
\end{equation}
For \(k\ge1\),
\begin{equation}
\begin{aligned}
C(k+1)-C(k)
&=\frac1{(k+1)!}\int_0^\infty
t^k e^{-t}\bigl(t-(k+1)\bigr)u(t)\,dt\\
&=\frac1{k+1}\,\Cov\bigl(T,u(T)\bigr),
\qquad T\sim\Gamma(k+1,1),
\end{aligned}
\end{equation}
which is \(\le0\) by Lemma~\ref{lem:cov}, since \(t\mapsto t\) is increasing
and \(u\) is nonincreasing. For \(k=0\),
\begin{equation}
C(1)-C(0)=\Cov(T,u(T))-b,\qquad T\sim\Gamma(1,1),
\end{equation}
which is also nonpositive. Hence \(C(k+1)\le C(k)\) for every \(k\ge0\).
\end{proof}

\begin{remark}[Source problem]
Theorem~\ref{thm:bendikov} answers the Bendikov--Cygan monotonicity question
\cite{bendikov-cygan} for the renewal sequence of a source-normalized special
Bernstein function. The proof reduces \(C(k+1)-C(k)\) to a Gamma-average
covariance with a nonincreasing potential density.
\end{remark}

\section{Obstructions and certificate targets}\label{sec:obstructions}

The boundary section separates proved symbolic obstructions from pending
certificate targets. These targets are not used in any theorem-level claim. In
this paper, a certificate target is an item whose
intended proof requires exact identities, outward-rounded interval enclosures,
and tail or endpoint estimates sufficient to determine the asserted sign on the
stated domain. Certificate targets are recorded only to make the remaining
verification task reproducible. Only proved results support the main
affirmative claims.

\subsection{A real-valued ID-to-DID transfer certificate target}

Townes' real-valued mixed-Poisson framework
\cite{townes-mixed,townes-stable,steutel-vanharn}
suggests a natural test for failure of inheritance from real-valued
Poisson-admissible infinite divisibility to discrete infinite divisibility. The
candidate in Appendix~\ref{app:townes-target} remains a certificate target
until the complete-monotonicity and root-obstruction checks are supplied as
exact symbolic or outward-rounded interval files.

\subsection{A Bazhlekova--Bazhlekov gap certificate remains pending}

Bazhlekova and Bazhlekov \cite{bazhlekova} impose
\(\alpha-\alpha_m\le1\) in their two-term fractional diffusion-wave theorem.
Appendix~\ref{app:bazhlekova-target} records two proposed outside-gap seeds.
They remain certificate targets until the Wright-function positivity checks are
supplied on the full half-line; we use Wright-function normalization compatible
with the classical asymptotic literature \cite{wright,gorenflo-kilbas-mainardi-rogozin}.

\subsection{A Rastegar--Roitershtein redundancy conjecture is false}

Rastegar and Roitershtein \cite{rastegar-roitershtein} conjecture that
condition~(1.3) (numbered (3) in the preprint) in their characterization
Theorem~1.1 is unnecessary for $n\ge3$. In this subsection, the source equation
is
\begin{equation}\label{eq:rr-source-equation}
\prod_{j=1}^{n}\varphi(\mu_j t)=
\sum_{k=1}^{n}\theta_k\varphi(\mu_k t),
\qquad
\theta_k=\prod_{j\ne k}\frac{\mu_k}{\mu_k-\mu_j}.
\end{equation}

\begin{counterexample}[Reciprocal-characteristic certificate]\label{cx:rr}
For \eqref{eq:rr-source-equation}, take \(n=3\) and
\begin{equation}
\begin{aligned}
\mu&=(1,2,-2/3),&
\theta&=(-3/5,\,3/2,\,1/10),\\
\varphi(t)&=\frac1{1+at^2},&
a&>0.
\end{aligned}
\end{equation}
Then the source functional equation holds identically while the law is
nondegenerate, mean-zero, and not one-sided exponential. Hence condition (3)
cannot be dropped.
\end{counterexample}
\begin{proof}
The source weights satisfy
\begin{equation}
\theta_k=\prod_{j\ne k}\frac{\mu_k}{\mu_k-\mu_j}
=\left(-\frac35,\frac32,\frac1{10}\right),\qquad \sum_k\theta_k=1.
\end{equation}
Moreover condition (3) of Rastegar--Roitershtein \cite{rastegar-roitershtein}
fails already at \(m=2\),
since
\begin{equation}
\begin{aligned}
\sum_{k_1+k_2+k_3=2}\mu_1^{k_1}\mu_2^{k_2}\mu_3^{k_3}
&=\mu_1^2+\mu_2^2+\mu_3^2+\mu_1\mu_2+\mu_1\mu_3+\mu_2\mu_3\\
&=\frac{49}{9}+2-\frac23-\frac43
=\frac{49}{9}.
\end{aligned}
\end{equation}
Writing \(A=at^2\), the reciprocal-polynomial cancellation is
\begin{equation}
\begin{aligned}
\sum_{k=1}^{3}\theta_k\prod_{j\ne k}\bigl(1+a\mu_j^2 t^2\bigr)
&=-\frac35\Bigl(1+\frac{40}{9}A+\frac{16}{9}A^2\Bigr)
  +\frac32\Bigl(1+\frac{13}{9}A+\frac49 A^2\Bigr)\\
&\quad+\frac1{10}(1+5A+4A^2)=1,
\end{aligned}
\end{equation}
so that $\varphi=1/(1+at^2)$, the characteristic function of a centered
symmetric Laplace law, satisfies the source functional equation
identically. The source's separate condition (3) fails by the preceding
display. The law is mean-zero and symmetric, hence not one-sided exponential
and outside the source conclusion class. The certificate verifies the full
rational characteristic-function identity for all \(t\), so all Taylor orders
are covered automatically.
\end{proof}

\begin{remark}[Source problem]
Counterexample~\ref{cx:rr} answers the Rastegar--Roitershtein redundancy
conjecture \cite{rastegar-roitershtein} negatively for \(n=3\).
The reusable device is the \emph{finite reciprocal-characteristic certificate}:
whenever a nonzero real $\mu$, weights $\theta_k$, and a positive-definite
reciprocal polynomial $q(t)=1+at^2$ satisfy $\sum_k\theta_k\prod_{j\ne
k}q(\mu_j t)\equiv1$, the function $\varphi=1/q$ solves the source equation, and
if the resulting law is outside the conclusion class it is a counterexample.
\end{remark}

\subsection{A Jonckheere--Shneer front equation admits non-CM solutions}

Jonckheere and Shneer \cite{jonckheere-shneer} record a distributional front
equation and ask whether it forces completely monotone tails. In this
subsection, the source front equation is
\begin{equation}\label{eq:js-source-front}
F(x)=\int_0^\infty F(xu)^2\,\Prob(\widetilde B\in du),\qquad x\ge0.
\end{equation}
The example below is an equation-level obstruction; travelling-wave existence
results with additional source hypotheses require separate analysis.

\begin{counterexample}[Weibull front-equation obstruction]\label{cx:js}
For \eqref{eq:js-source-front}, fix \(b\in(1/2,1)\) and \(c>0\), and set
\begin{equation}
\begin{aligned}
\alpha&=\frac{\log(1/2)}{\log b}>1,&
\widetilde B&=b\ \text{a.s.},\\
F(x)&=e^{-cx^\alpha},&
x&\ge0.
\end{aligned}
\end{equation}
Then \(F\) is a continuous nonincreasing survival function solving the source
front equation, yet \(F\notin\CM\).
\end{counterexample}
\begin{proof}
Since $\log b<0$, $\alpha>1$ and $b^\alpha=\tfrac12$. With $\widetilde B=b$
deterministic,
\begin{equation}
\int_0^\infty F(xu)^2\,\Prob(\widetilde B\in du)=F(bx)^2=e^{-2cb^\alpha
x^\alpha}=e^{-cx^\alpha}=F(x),
\end{equation}
so $F$ (a Weibull survival function with shape $\alpha>1$) solves the equation.
But complete monotonicity would require $F''\ge0$, whereas
\begin{equation}
F''(x)=c\alpha x^{\alpha-2}e^{-cx^\alpha}\bigl(c\alpha x^\alpha-(\alpha-1)\bigr)<0
\quad\text{for } 0<x<\Bigl(\tfrac{\alpha-1}{c\alpha}\Bigr)^{1/\alpha}.
\end{equation}
Thus $F\notin\CM$, answering the literal equation-(16) question in the negative.
\end{proof}

\begin{remark}[Source problem]
Counterexample~\ref{cx:js} gives a non-CM survival solution of the
Jonckheere--Shneer front equation \cite{jonckheere-shneer} in the degenerate
case \(\widetilde B=b\). It is an equation-level obstruction to the literal
question, raised after the source's equation~(16), whether nonincreasing
solutions of that equation must be completely monotone. Source theorems imposing
nondegeneracy, travelling-wave admissibility, or additional normalization would
require a separate nondegenerate example.
\end{remark}

\section{Concluding remarks}

The proved part consists of six affirmative theorems and two equation-level
counterexamples. The Townes and Bazhlekova--Bazhlekov items remain pending
certificate targets and are not used as theorem-level evidence. Together these
items span coalescent theory, copula modeling, fractional diffusion, renewal
theory, and distributional characterizations. The proved results share the same
recognition principle: normalize, identify the
representing measure (a Pollard measure, a Gamma law, a ranked simplex, a stable
subordinator, a potential density), and read off the sign pattern. The
affirmative results (Theorems~\ref{thm:prabhakar}, \ref{thm:mecke},
\ref{thm:mohle}, \ref{thm:pd}, \ref{thm:li}, \ref{thm:bendikov}) are proved by
four engines---Bernstein--Widder inversion, covariance sign, inverse-ODE sign
induction, and stable subordination---while the explicit obstructions locate
the boundary of validity. The two headline affirmatives, M\"ohle's Problem~6.3
and the Pearse--Bondell complete-monotonicity conjecture, deliver directly
usable conclusions: a second-moment domination for block counts in coalescents
with dust, and the certification of the power-divergence copulas as
Archimedean generators in every dimension.

Two threads seem worth pursuing. First, the finite-simplex ordered-pair kernel
certificate behind Theorem~\ref{thm:mohle} should apply to other occupancy and
concentration functionals built from $(1-x)^n$ and $(1-x-y)^n$. Second, the
inverse-ODE sign induction (Lemma~\ref{lem:invode}) used for the
power-divergence range is a general tool for Archimedean generator inverses
defined implicitly by a polynomial or rational relation; a systematic
discriminant-type classification in the spirit of Theorem~\ref{thm:li} appears
within reach.

\appendix

\section{Certificate target details}\label{app:cert-targets}

The following items are retained to make the boundary cases reproducible. They
serve neither as theorems nor as counterexamples in the main text. The numerical
constants below are exploratory candidate bounds, recorded only to specify the
certificate to be produced. Promoting either item would require exact symbolic
proofs or outward-rounded interval certificates for every stated sign claim on
the full domain.

\subsection{Townes real-valued mixed-Poisson target}\label{app:townes-target}

\begin{problem}[Townes certificate target]\label{prob:townes}
For \(\eta=10^{-4}\), set
\begin{equation}
\begin{aligned}
L(s)=\exp\!\Bigl(&-10\eta s+10^{-4}(e^{-10\eta s}-1)\\
&\quad +(e^{\eta s}-1)\Bigr),
\end{aligned}
\end{equation}
the bilateral Laplace transform of \(X=\eta(10+10P-Q)\), where \(P\) and \(Q\)
are independent and
\begin{equation}
P\sim\Pois(10^{-4}),\qquad Q\sim\Pois(1).
\end{equation}
Supply a complete certificate that \(L\) is completely monotone on \([0,1]\),
while
\(L^{1/100}\) is not completely monotone at \(s=0\). This would produce a real
ID mixing law whose mixed Poisson law is not \(100\)-divisible.
\end{problem}

\begin{remark}[Certificate requirements]
The intended check starts from
\begin{equation}
(-1)^k L^{(k)}(s)=L(s)\eta^k\E[Z_{a,b}^k],
\end{equation}
with Esscher-tilted \(Z_{a,b}=10+10P_a-Q_b\),
\(a=10^{-4}e^{-10\eta s}\), and \(b=e^{\eta s}\). It suffices that
\(\E[Z_{a,b}^k]\ge0\) for all \(k\) and all \((a,b)\) in the rectangle
\(a\in[a_0,A]\), \(b\in[1,B]\), with \(a_0>9.99\cdot10^{-5}\),
\(A=10^{-4}\), and \(B<1.001\). Even moments are nonnegative; for odd moments
the identity
\begin{equation}
\E[Z^{2j+1}]=(2j+1)\int_0^\infty r^{2j}
\bigl(\Prob(Z>r)-\Prob(Z<-r)\bigr)\,dr
\end{equation}
reduces matters to tail dominance
\(\Prob(Z_{a,b}>r)\ge\Prob(Z_{a,b}<-r)\) for integer \(r\ge0\), worst at
\(a=a_0,b=B\). The intended certificate would prove, for \(0\le r\le9\),
\(\Prob(Z>r)\ge e^{-a-b}>e^{-A-B}\), while
\(\Prob(Z<-r)\le\Prob(Q_B\ge11)<3\cdot10^{-8}<e^{-A-B}\). For \(r\ge10\) with
\(m=\lfloor r/10\rfloor\ge1\), the event \(\{P_a=m,Q_b=0\}\) gives
\(\Prob(Z>r)\ge e^{-A-B}a_0^m/m!\), whereas
\(\Prob(Z<-r)\le\Prob(Q_B\ge r+11)\). The intended Poisson-tail certificate
would prove a ratio \(<10^{-14}\) at \(m=1\) and monotone decrease in \(m\).
These inequalities need exact rational or outward-rounded interval enclosures
before the item can be promoted to a counterexample.

For the proposed root obstruction, the seed \(Z=10+10P-Q\) at \(s=0\) has
cumulants \(c_1=9.001\), \(c_2=1.01\), and \(c_3=-0.9\). At time \(t=1/100\)
the third raw moment is
\begin{equation}
\begin{aligned}
m_3(t)&=tc_3+3t^2c_1c_2+t^3c_1^3\\
&=-0.009+0.002727303+0.000729243027001\\
&=-0.005543453973<0,
\end{aligned}
\end{equation}
which would imply
\((-1)^3\bigl(L^{1/100}\bigr)'''(0)=\eta^3 m_3(1/100)<0\). If the mixed Poisson
law with PGF \(G(z)=L(1-z)\) were \(100\)-divisible, its principal root would
have to be a PGF and hence absolutely monotone at \(z=1\). This final
implication is elementary; the missing work is the complete monotonicity
certificate for \(L\) on \([0,1]\).
\end{remark}

\subsection{Bazhlekova--Bazhlekov outside-gap target}\label{app:bazhlekova-target}

\begin{problem}[Bazhlekova gap certificate]\label{prob:bazhlekova}
For \(g_1(s)=s^{3/2}+s^{2/5}\) and
\(g_2(s)=s^{11/10}+s^{1/20}\), the propagation positivity and subordination
package would hold even though \(\alpha-\alpha_m>1\), if the Wright-function
positivity certificates described below are supplied in full. A complete
certificate would give a counterexample to necessity of the source gap
condition.
\end{problem}

\begin{remark}[Certificate requirements]
For \(h(s)=s^\alpha(1+s^{-p})^{1/2}\) with
\(0<\alpha-p/2<\alpha<1\) and \(1<p<2\), the Wright-density normal form gives
\(\Lap^{-1}\{h'\}(t)=t^{-\alpha}\mathcal W_{\alpha,p}(t^p)\), where
\begin{equation}
\mathcal W_{\alpha,p}(x)
=-\sum_{m\ge0}\binom{1/2}{m}\frac{x^m}{\Gamma(pm-\alpha)}.
\end{equation}
Thus \(\mathcal W_{\alpha,p}>0\) on \((0,\infty)\) would imply \(h\in\BF\).
For the two seeds \((\alpha,p)=(3/4,11/10)\) and
\((11/20,21/20)\), the intended positivity certificate has three ranges:
outward-rounded evaluation on \([0,10]\); derivative lower bounds on
\([10,20]\), with candidate slopes \(\ge0.0201\) and \(\ge0.0030\); and a
three-term Watson-contour tail for \(x\ge20\), with candidate margins \(0.732\)
and \(0.101\) at \(x=20\). To promote this
problem to a counterexample, the manuscript must include the interval
arithmetic, endpoint bounds, and Watson-tail estimates proving
\(\mathcal W_{\alpha,p}>0\) on the full half-line. The certificate should be
stated at the Bernstein/subordination level. Off-cut zeros prevent a
complete-Bernstein shortcut when the gap exceeds \(1\).
\end{remark}

\section*{Declaration of competing interest}

The author declares that there are no known competing financial interests or personal relationships that could have appeared to influence the work reported in this paper.

\section*{Funding}

This research did not receive any specific grant from funding agencies in the public, commercial, or not-for-profit sectors.

\section*{Data availability}

No empirical data were used. Claims requiring exact symbolic or
interval-arithmetic verification are either proved in the text or explicitly
marked as certificate targets. The theorem-level evidence is confined to proved
statements in the text.

\section*{Declaration of generative AI and AI-assisted technologies in the manuscript preparation process}

During the preparation of this work, the author used the Pudim AI research workflow, including ChatGPT and Codex, to support literature triage, manuscript organization, language revision, and consistency checks of references and proofs. Public provenance for the original Pudim AI / zeta-law demo workflow is available at \url{https://github.com/pudim-project/pudim-ai-demo-zetalaw}. The application targets used for this manuscript were APP-0015, APP-0051, APP-0058, APP-0066, APP-0069, APP-0071, APP-0078, APP-0084, APP-0091, and APP-0092. After using these tools, the author reviewed and edited the content as needed and takes full responsibility for the content of the published article.


\begin{thebibliography}{99}

\bibitem{ali-silvey}
S.~M.~Ali and S.~D.~Silvey,
\emph{A general class of coefficients of divergence of one distribution from
another},
J.\ Roy.\ Statist.\ Soc.\ Ser.\ B \textbf{28} (1966), 131--142.

\bibitem{amari}
S.~Amari,
\emph{Information Geometry and Its Applications},
Springer, 2016.

\bibitem{comtet}
L.~Comtet,
\emph{Advanced Combinatorics},
D.~Reidel, 1974.

\bibitem{csiszar}
I.~Csisz\'ar,
\emph{Eine informationstheoretische Ungleichung und ihre Anwendung auf den
Beweis der Ergodizit\"at von Markoffschen Ketten},
Publ.\ Math.\ Inst.\ Hungar.\ Acad.\ Sci.\ \textbf{8} (1963), 85--108.

\bibitem{cressie-read}
N.~Cressie and T.~R.~C.~Read,
\emph{Multinomial goodness-of-fit tests},
J.\ Roy.\ Statist.\ Soc.\ Ser.\ B \textbf{46} (1984), 440--464.

\bibitem{read-cressie}
T.~R.~C.~Read and N.~A.~C.~Cressie,
\emph{Goodness-of-Fit Statistics for Discrete Multivariate Data},
Springer, 1988.

\bibitem{feller}
W.~Feller,
\emph{An Introduction to Probability Theory and Its Applications, Vol.~II},
2nd ed., Wiley, 1971.

\bibitem{gaiser-mohle}
F.~Gaiser and M.~M\"ohle,
\emph{On the block counting process and the fixation line of exchangeable
coalescents},
ALEA Lat.\ Am.\ J.\ Probab.\ Math.\ Stat.\ \textbf{13}(2) (2016), 809--833.

\bibitem{garra-garrappa}
R.~Garra and R.~Garrappa,
\emph{The Prabhakar or three parameter Mittag--Leffler function: theory and
application},
Commun.\ Nonlinear Sci.\ Numer.\ Simul.\ \textbf{56} (2018), 314--329.

\bibitem{gorenflo-kilbas-mainardi-rogozin}
R.~Gorenflo, A.~A.~Kilbas, F.~Mainardi, and S.~V.~Rogosin,
\emph{Mittag-Leffler Functions, Related Topics and Applications},
Springer Monographs in Mathematics, Springer, 2014.

\bibitem{kingman-poisson}
J.~F.~C.~Kingman,
\emph{Poisson Processes},
Oxford University Press, 1993.

\bibitem{mainardi-garrappa}
F.~Mainardi and R.~Garrappa,
\emph{On complete monotonicity of the Prabhakar function and non-Debye
relaxation in dielectrics},
J.\ Comput.\ Phys.\ \textbf{293} (2015), 70--80.

\bibitem{morimoto}
T.~Morimoto,
\emph{Markov processes and the H-theorem},
J.\ Phys.\ Soc.\ Japan \textbf{18} (1963), 328--331.

\bibitem{pollard}
H.~Pollard,
\emph{The completely monotonic character of the Mittag-Leffler function
\(E_\alpha(-x)\)},
Bull.\ Amer.\ Math.\ Soc.\ \textbf{54} (1948), 1115--1116.

\bibitem{mittag-leffler}
G.~M.~Mittag-Leffler,
\emph{Sur la nouvelle fonction \(E_\alpha(x)\)},
C.~R. Acad.\ Sci.\ Paris \textbf{137} (1903), 554--558.

\bibitem{prabhakar}
T.~R.~Prabhakar,
\emph{A singular integral equation with a generalized Mittag-Leffler function
in the kernel},
Yokohama Math.\ J.\ \textbf{19} (1971), 7--15.

\bibitem{bazhlekova}
E.~Bazhlekova and I.~Bazhlekov,
\emph{Subordination approach to multi-term time-fractional diffusion-wave
equations},
J.\ Comput.\ Appl.\ Math.\ \textbf{339} (2018), 179--192,
doi:10.1016/j.cam.2017.11.003, arXiv:1707.09828.
\url{https://arxiv.org/abs/1707.09828}.

\bibitem{bendikov-cygan}
A.~Bendikov and W.~Cygan,
\emph{Alpha-stable random walk has massive thorns},
Colloq.\ Math.\ \textbf{138}(1) (2015), 105--129, arXiv:1307.4947.
\url{https://arxiv.org/abs/1307.4947}.

\bibitem{berg-forst}
C.~Berg and G.~Forst,
\emph{Potential Theory on Locally Compact Abelian Groups},
Springer, 1975.

\bibitem{jonckheere-shneer}
M.~Jonckheere and S.~Shneer,
\emph{Waves everywhere: a distributional equation approach to front
propagation},
arXiv:2604.16956.
\url{https://arxiv.org/abs/2604.16956}.

\bibitem{kimberling}
C.~H.~Kimberling,
\emph{A probabilistic interpretation of complete monotonicity},
Aequationes Math.\ \textbf{10} (1974), 152--164.

\bibitem{li-relativistic}
C.-G.~Li,
\emph{Space-time duality in relativistic diffusion via subordination},
arXiv:2606.04270.
\url{https://arxiv.org/abs/2606.04270}.

\bibitem{mcneil-neslehova}
A.~J.~McNeil and J.~Ne\v{s}lehov\'a,
\emph{Multivariate Archimedean copulas, $d$-monotone functions and
$\ell_1$-norm symmetric distributions},
Ann.\ Statist.\ \textbf{37} (2009), 3059--3097.

\bibitem{nelsen}
R.~B.~Nelsen,
\emph{An Introduction to Copulas},
2nd ed., Springer, 2006.

\bibitem{mecke-nagel-weiss}
J.~Mecke, W.~Nagel, and V.~Wei\ss,
\emph{Joseph Mecke's last fragmentary manuscripts---a compilation},
arXiv:1703.10000.
\url{https://arxiv.org/abs/1703.10000}.

\bibitem{mohle}
M.~M\"ohle,
\emph{The rate of convergence of the block counting process of exchangeable
coalescents with dust},
ALEA Lat.\ Am.\ J.\ Probab.\ Math.\ Stat.\ \textbf{18} (2021), 1195--1220.
\url{https://alea.impa.br/articles/v18/18-44.pdf}.

\bibitem{pearse-bondell}
A.~R.~Pearse and H.~Bondell,
\emph{Power-divergence copulas: a new class of Archimedean copulas, with an
insurance application},
arXiv:2510.06177.
\url{https://arxiv.org/abs/2510.06177}.

\bibitem{pitman-coalescents}
J.~Pitman,
\emph{Coalescents with multiple collisions},
Ann.\ Probab.\ \textbf{27} (1999), 1870--1902.

\bibitem{pringsheim}
A.~Pringsheim,
\emph{\"Uber Potenzreihen mit positiven Koeffizienten},
Math.\ Ann.\ \textbf{43} (1893), 525--532.

\bibitem{rastegar-roitershtein}
R.~Rastegar and A.~Roitershtein,
\emph{On a characterization of exponential and double exponential
distributions},
REVSTAT \textbf{23}(1) (2025), 47--52, arXiv:2203.10495.
\url{https://arxiv.org/abs/2203.10495}.

\bibitem{schilling-song-vondracek}
R.~L.~Schilling, R.~Song, and Z.~Vondra\v{c}ek,
\emph{Bernstein Functions: Theory and Applications},
2nd ed., de Gruyter, 2012.

\bibitem{sagitov}
S.~Sagitov,
\emph{The general coalescent with asynchronous mergers of ancestral lines},
J.\ Appl.\ Probab.\ \textbf{36} (1999), 1116--1125.

\bibitem{schweinsberg}
J.~Schweinsberg,
\emph{Coalescents with simultaneous multiple collisions},
Electron.\ J.\ Probab.\ \textbf{5} (2000), paper no.~12, 1--50.

\bibitem{sibisi-prabhakar}
S.~Sibisi,
\emph{A probabilistic perspective on Feller, Pollard and the complete
monotonicity of the Mittag-Leffler function},
arXiv:2301.01466.
\url{https://arxiv.org/abs/2301.01466}.

\bibitem{sklar}
A.~Sklar,
\emph{Fonctions de r\'epartition \`a \(n\) dimensions et leurs marges},
Publ.\ Inst.\ Statist.\ Univ.\ Paris \textbf{8} (1959), 229--231.

\bibitem{steutel-vanharn}
F.~W.~Steutel and K.~van Harn,
\emph{Infinite Divisibility of Probability Distributions on the Real Line},
Marcel Dekker, 2004.

\bibitem{townes-mixed}
F.~W.~Townes,
\emph{Mixed Poisson families with real-valued mixing distributions},
arXiv:2407.17614.
\url{https://arxiv.org/abs/2407.17614}.

\bibitem{townes-stable}
F.~W.~Townes,
\emph{Broadly discrete stable distributions},
arXiv:2509.05497.
\url{https://arxiv.org/abs/2509.05497}.

\bibitem{widder}
D.~V.~Widder,
\emph{The Laplace Transform},
Princeton University Press, 1946.

\bibitem{wright}
E.~M.~Wright,
\emph{The asymptotic expansion of the generalized Bessel function},
Proc.\ London Math.\ Soc.\ \textbf{38} (1935), 257--270.

\end{thebibliography}
\end{document}